\newcounter{sarrow}
\newcommand\xrsquigarrow[1]{%
\stepcounter{sarrow}%
\begin{tikzpicture}[decoration=snake]
\node (\thesarrow) {\strut#1};
\draw[->,decorate] (\thesarrow.south west) -- (\thesarrow.south east);
\end{tikzpicture}%
}
\newtheoremstyle{vety}	
  {5mm}			
  {5mm}			
  {\itshape}		
  {}			
  {}			
  {\bf}		
  {.5em}		
  {\thmname{\textbf{#1}}\thmnumber{\textbf{ #2}}\thmnote{\textnormal{ (#3)}}}
\theoremstyle{vety}
\newtheorem{thm}{Theorem}
\newtheorem{lem}[thm]{Lemma}
\newtheorem{ass}[thm]{Assumption}
\newtheoremstyle{definice}	
  {5mm}				
  {5mm}				
  {\normalfont}			
  {}				
  {}				
  {\bf}			
  {.5em}			
  {\thmname{\textbf{#1}}\thmnumber{\textbf{ #2}}\thmnote{\textnormal{ (#3)}}}
\newenvironment{customthm}[1]
  {\innercustomthm}
  {\endinnercustomthm}
\newcommand\ackname{Acknowledgements}
  \newenvironment{acknowledgements}{%
      \titlepage
      \null\vfil
      \@beginparpenalty\@lowpenalty
      \begin{center}%
        \bfseries \ackname
        \@endparpenalty\@M
      \end{center}}%
     {\par\vfil\null\endtitlepage}
  \newenvironment{acknowledgements}{%
      \if@twocolumn
        \section*{\abstractname}%
      \else
        \small
        \begin{center}%
          {\bfseries \ackname\vspace{-.5em}\vspace{\z@}}%
        \end{center}%
        \quotation
      \fi}
      {\if@twocolumn\else\endquotation\fi}
\newcommand\poename{Poetry}
  \newenvironment{poetry}{%
      \titlepage
      \null\vfil
      \@beginparpenalty\@lowpenalty
      \begin{center}%
        \bfseries \poename
        \@endparpenalty\@M
      \end{center}}%
     {\par\vfil\null\endtitlepage}
\newcommand\keyname{Keywords}
  \newenvironment{keywords}{%
      \titlepage
      \null\vfil
      \@beginparpenalty\@lowpenalty
      \begin{center}%
        \bfseries \keyname
        \@endparpenalty\@M
      \end{center}}%
     {\par\vfil\null\endtitlepage}
  \newenvironment{keywords}{%
      \if@twocolumn
        \section*{\abstractname}%
      \else
        \small
        \begin{center}%
          {\bfseries \keyname\vspace{-.5em}\vspace{\z@}}%
        \end{center}%
        \quotation
      \fi}
      {\if@twocolumn\else\endquotation\fi}
\theoremstyle{definice}
\newtheorem{rem}[thm]{Remark}
\DeclareMathOperator{\dir}{div}
\DeclareMathOperator{\tr}{Tr}
\DeclareMathOperator{\sym}{sym}
\DeclareMathOperator{\llimsup}{\mathbf{lim\,sup}}
\newcommand{\dfe}{\mathrel=\!{\mathop:} \ }
\newcommand{\ddf}{\mathrel{\mathop:}=}
\DeclareMathOperator*{\esssup}{ess\ sup}
\DeclareMathOperator*{\esslim}{ess\ lim}
\DeclareMathOperator*{\essliminf}{ess\ lim\,inf}
\def\mfrac#1#2{\frac{#1}{#2})}
\def\ocirc#1{\ifmmode\setbox0=\hbox{$#1$}\dimen0=\ht0 \advance\dimen0
  by1pt\rlap{\hbox to\wd0{\hss\raise\dimen0
  \hbox{\hskip.2em$\scriptscriptstyle\circ$}\hss}}#1\else {\accent"17 #1}\fi}
\def \0{\boldsymbol{0}}
\def \A{\mathcal{A}}
\def \Aa{\boldsymbol{A}}
\def \b{\boldsymbol{b}}
\def \BB{\boldsymbol{B}}
\def \B{\mathcal{B}}
\def \bs{\overline{\boldsymbol{S}}}
\def \C{\mathcal{C}}
\def \CC{\boldsymbol{C}}
\def \d{\delta}
\def \D{\boldsymbol{D}}
\def \dk{\overline{\boldsymbol{D}}^k}
\def \dt{\partial_t}
\def \e{\varepsilon}
\def \ek{e^k}
\def \Ek{E^k}
\def \f{\boldsymbol{f}}
\def \ge{\gamma(e)}
\def \ig{\int_{\Gamma}}
\def \igt{\int_{\Gamma_t}}
\def \ii{\int_{\Omega}}
\def \iii{\int_{\partial \Omega}}
\def \intej{\int_{E^j}}
\def \iq{\int_{Q}}
\def \iqt{\int_{Q_t}}
\def \itt{\int_0^T}
\def \I{\boldsymbol{I}}
\def \ke{\kappa(e)}
\def \kk{\kappa(\ek)}
\def \Li{L^{\infty}}
\def \Ln{L^{2}_{\n,\dir}}
\def \M{\mathcal{M}}
\def \n{\boldsymbol{n}}
\def \N{\mathbb{N}}
\def \ne{\nu(e)}
\def \nn{\nabla}
\def \om{\Omega}
\def \pk{p^k}
\def \Pk{\mathcal{P}^k}
\def \q{\boldsymbol{q}}
\def \R{\mathbb{R}}
\def \Rt{\R^{3\times 3}_{\sym}}
\def \s{\boldsymbol{s}}
\def \S{\boldsymbol{S}}
\def \sged{\sigma_2(e)}
\def \sgej{\sigma_1(e)}
\def \sk{\boldsymbol{s}^k}
\def \Sk{\boldsymbol{S}^k}
\def \t{\boldsymbol{T}}
\def \tej{\tau_1(e)}
\def \ted{\tau_2(e)}
\def \ve{\boldsymbol{v}}
\def \vok{(\vk \otimes \vk)\Phi_k(|\vk|)}
\def \vokk{(\ve \otimes \ve)\Phi_k(|\ve|)}
\def \vokn{(\ve^{k_n} \otimes \ve^{k_n})\Phi_{k_n}(|\ve^{k_n}|)}
\def \vk{\boldsymbol{v}^k}
\def \vov{\ve \otimes\ve}
\def \vt{\ve_{\tau}}
\def \w{\boldsymbol{w}}
\def \wk{\boldsymbol{w}^k}
\def \Wn{W^{1,2}_{\n}}
\def \Wnd{W^{1,2}_{\n,\dir}}
\def \Wnn{W_{\n}}
\def \z{\boldsymbol{z}}
\begin{document}
\title{On a Navier-Stokes-Fourier-like system capturing transitions between viscous and inviscid fluid regimes and between no-slip and perfect-slip boundary conditions}

\author{
Erika Maringov\'a\footnote{Charles University, Faculty of Mathematics and Physics, Mathematical Institute, Sokolovsk\'a 83, 186 75 Prague 8, Czech Republic, \tt{maringova@karlin.mff.cuni.cz}},
Josef \v Zabensk\'y\footnote{Corresponding author; University of W\"urzburg, Institute of Mathematics, Chair of Mathematics XI, Emil-Fischer-Stra\ss e 40, 97074 W\"urzburg, Germany, \tt{josef.zabensky@gmail.com}}
}

\date{\today}

\maketitle

\begin{abstract}
\noindent We study a generalization of the Navier-Stokes-Fourier system for an incompressible fluid where the deviatoric part of the Cauchy stress tensor is related to the symmetric part of the velocity gradient via a maximal monotone 2-graph that is continuously parametrized by the temperature. As such, the considered fluid may go through transitions between three of the following regimes: it can flow as a Bingham fluid for a specific value of the temperature, while it can behave as the Navier-Stokes fluid for another value of the temperature or, for yet another temperature, it can respond as the Euler fluid until a certain activation initiates the response of the Navier-Stokes fluid. At the same time, we regard a generalized threshold slip on the boundary that also may go through various regimes continuously with the temperature. All material coefficients like the dynamic viscosity, friction or activation coefficients are assumed to be temperature-dependent. We establish the large-data and long-time existence of weak solutions, applying the $L^{\infty}$-truncation technique to approximate the velocity field.  
\end{abstract}

\begin{keywords}
 \noindent Incompressible fluid, heat-conducting fluid, existence theory, weak solutions, Bingham fluid, Navier-Stokes-Fourier system, maximal monotone graph, threshold slip, $L^{\infty}$-truncation, integrable pressure, inviscid fluid, implicit constitutive relations.
\end{keywords}

\section{Introduction}
We study internal unsteady flows of an incompressible heat-conducting fluid 
in three-dimensional bounded domains. The balance equations governing such flows are completed by implicit constitutive relations that characterize rheological properties of the fluid and boundary conditions. The main result of our work is the large-data and long-time existence analysis for the considered class of problems.

\paragraph{Formulation of the problem.} Let $\Omega \subset \mathbb{R}^3$ be a bounded domain and $[0,T)$ be the time interval for a given $T>0$. Denote $Q \ddf [0,T)\times \Omega$ and $\Gamma \ddf [0,T)\times \partial \Omega$ for brevity. Balance equations for flows of an incompressible heat-conducting fluid, 
representing the balance of its linear momentum, the constraint of incompressibility and the balance of energy read
\begin{align}
 \dt \ve + \dir( \vov) - \dir \S + \nn p &= \b \phantom{\b \cdot \ve} \qquad \text{in }Q, \label{system1} \\
\dir \ve & = 0 \phantom{\b \cdot \ve} \qquad \text{in } Q, \label{system2} \\
\dt E + \dir ((E+p)\ve -\S\ve - \ke \nn e ) &= \b \cdot \ve \phantom{\b} \qquad \text{in }Q.  \label{system3}
\end{align}
Here, $\ve:Q\to\R^3$ represents the velocity field, $p:Q\to\R$ is the pressure, $\S:Q\to\Rt$ the deviatoric part of the Cauchy stress\footnote{That is, denoting $\t$ the Cauchy stress, $\S = \t - \nicefrac{1}{3}(\tr \t)\I = \t - p\I$ with $\I$ being the identity tensor.}, $\b:Q\to\R^3$ the external body forces, $\D\ve$ the symmetric part of the velocity gradient, i.e.\ $\D\ve= \nicefrac{1}{2} (\nabla \ve + \nabla^T \ve)$, and $E$ is the sum of the kinetic and the internal energy $e$, that is 
$$E=\frac{|\ve|^2}{2}+e. $$ 
We assume throughout the whole paper that $e$ is proportional to the temperature and work with $e$ rather than with the temperature. Hence, the coefficient $\ke$ stands for the heat conductivity\footnote{Customarily, heat flux $\tilde{\q}(\theta)$ is described by the Fourier law $\tilde{\q}(\theta)=\tilde{\kappa}(\theta)\nabla \theta$ and then $\tilde{\kappa}$ is called \emph{heat conductivity}. Since we consider $\theta\mapsto e(\theta)$ to be a diffeomorphism (in general we have $e(\theta)= c(\theta) \theta$, where $c(\theta)$ is known as \emph{heat capacity}), we can find $\kappa(\cdot)$ such that $\tilde{\kappa}(\theta)\nabla \theta =\kappa(e(\theta))\nabla e(\theta)$. Therefore, we still call $\kappa(e)$ heat conductivity in this paper. For the same reason, in our contemplation we often refer to various temperature-dependent coefficients albeit we always handle functions of the internal energy only. Our reason is an arguably better intuition behind  a temperature-dependent entity.}. The balance of internal energy takes form
\begin{align} \label{boie}
\dt e + \dir (e \ve) - \dir( \ke \nn e) = \S \cdot \D\ve \qquad \text{in}~Q.
\end{align}
There are several forms of the balance of energy. They are equivalent for regular enough functions, but may differ within the context of merely integrable functions: some of them hold for larger class of functions than others. This feature can be successfully exploited; see Feireisl~\cite{feireisl2004dynamics}, Feireisl and M\'alek~\cite{feireisl2006navier} or Bul\'{i}\v cek et al.~\cite{bulivcek2009navier} for more details. 

Unfortunately, we cannot find a solution satisfying~\eqref{boie} for reasons of lacking compactness of its right-hand side in $L^1$ and therefore we will deal with a more amiable balance of total energy~\eqref{system3} (see~\cite{bulivcek2009navier} for a corresponding discussion). So long as we have a smooth solution and the equations~\eqref{system1} and~\eqref{system2} hold, the balance of energies~\eqref{system3} and~\eqref{boie} are actually equivalent. Inasmuch as we deal merely with weak solutions here, this equivalence cannot in general be verified. However, we still require~\eqref{boie} to be satisfied at least as an inequality (in the weak sense; see~\eqref{ttreti}) 
\begin{align} \label{boie2}
\dt e + \dir (e \ve) - \dir( \ke \nn e) \geq \S \cdot \D\ve \qquad \text{in}~Q.
\end{align}
We assume that the boundary is impermeable and require that there is no heat flux across the boundary, i.e.
\begin{align}
\ve \cdot \n  &= 0\phantom{\ve_0 e_0} \quad \text{on}~ \Gamma, \label{system7} \\
\ke \nn e \cdot \n &= 0\phantom{\ve_0 e_0} \quad \text{on}~ \Gamma, \label{system8} \\
\intertext{where $\n$ stands for the outward unit normal vector to $\om$. For given $\ve_0: \om \to \R^3$ and $e_0: \om \to \R$, the initial conditions are}
\ve(0) &= \ve_0\phantom{0e_0} \quad \text{in}~\om, \label{system9} \\
e(0) &= e_0\phantom{0\ve_0} \quad \text{in}~\om.\label{system10}
\end{align}
To close the system, we have yet to provide a constitutive relation involving $\S$ and $-(\S\n)_{\tau}$ along the boundary, denoting 
$$\z_{\tau}\ddf \z - (\z\cdot \n)\n \quad \text{for $\z:\partial \om \to \R^3$},$$ 
which is a key component of our work. We consider these relations to be not expressible as single-valued functions. Starting with the Cauchy stress, an immense advantage of the resulting implicit character lies in enabling us to include quite a general kind of rheology for the investigated fluid. Our model is able to capture the response of a Bingham fluid, Newtonian response and activated Euler fluids. Most importantly, it is capable of a continuous transition between these regimes solely by changing the temperature: the quantities $\S$, $\D\ve$ and $e$ are related to each other by means of a maximal monotone 2-graph parametrised by $e$ (see e.g.~\cite{BGMG,SS14}). More precisely, the triplet $(\S,\D\ve,e)$ is connected through the condition $(\S,\D\ve,e)\in\A \subset \mathbb{R}^{3 \times 3} \times \mathbb{R}^{3 \times 3} \times \R$ a.e.\ in $Q$, where $\A$ is defined through
\begin{align} \label{satis1}
 (\S,\D\ve,e)\in\A \quad &\Longleftrightarrow \quad 2\ne \frac{\left(|\D\ve|-\tau_1(e) \right)^+}{|\D\ve|}\D\ve= \frac{\left(|\S|-\tau_2(e) \right)^+}{|\S|}\S.
\end{align}
Function $\nu(e)$ represents the shear viscosity and $\tau_1(e), \tau_2(e)$ play the role of two activations\footnote{The required properties, like non-negativity and boundedness, are all summarized in Assumption~\ref{assonkoefs} on p.~\pageref{assonkoefs}.}. 
Behaviour of the fluid is Newtonian whenever $\tau_1(e) = \tau_2(e) = 0$. Considering $\tau_1(e)= 0$ and $\tau_2(e)> 0$, the fluid responds as the Bingham fluid and $\tau_2(e)$ is referred to as the yield stress. On the other hand, once $\tau_1(e) > 0$ and $\tau_2(e) = 0$, the fluid behaves as the Euler fluid until the shear rate $\tau_1(e)$ is reached. In order to eliminate the case when both $\tau_1$ and $\tau_2$ are positive, we impose an additional condition\footnote{Hence for any fixed $e$, we are always in the situation $\D\ve = \D\ve(\S)$ or $\S=\S(\D\ve)$; see also Figure~\ref{figura}.}
\begin{align} \label{sucin0tau}
\tau_1(e)  \tau_2(e)=0 \qquad \text{ for all } e \in \R.
\end{align}
For an easy non-trivial example that satisfies this as well as all other required properties given later in Assumption~\ref{assonkoefs}, we can take $\tau_1(e)\ddf \max\{0, \min \{1,e-1 \} \}$ and $\tau_2(e)\ddf \max\{0, \min \{1,1-e \} \}$.

Should one of the activating coefficients tend to infinity for a certain temperature, we would obtain the limit cases in the form of the inviscid fluid ($\tau_1 \to \infty$) or the rigid body motion ($\tau_2 \to \infty$), neither of which is covered in our analysis.
\begin{figure}[!tbp] 
  \centering
  \subfloat{\includegraphics[width=0.3\textwidth]{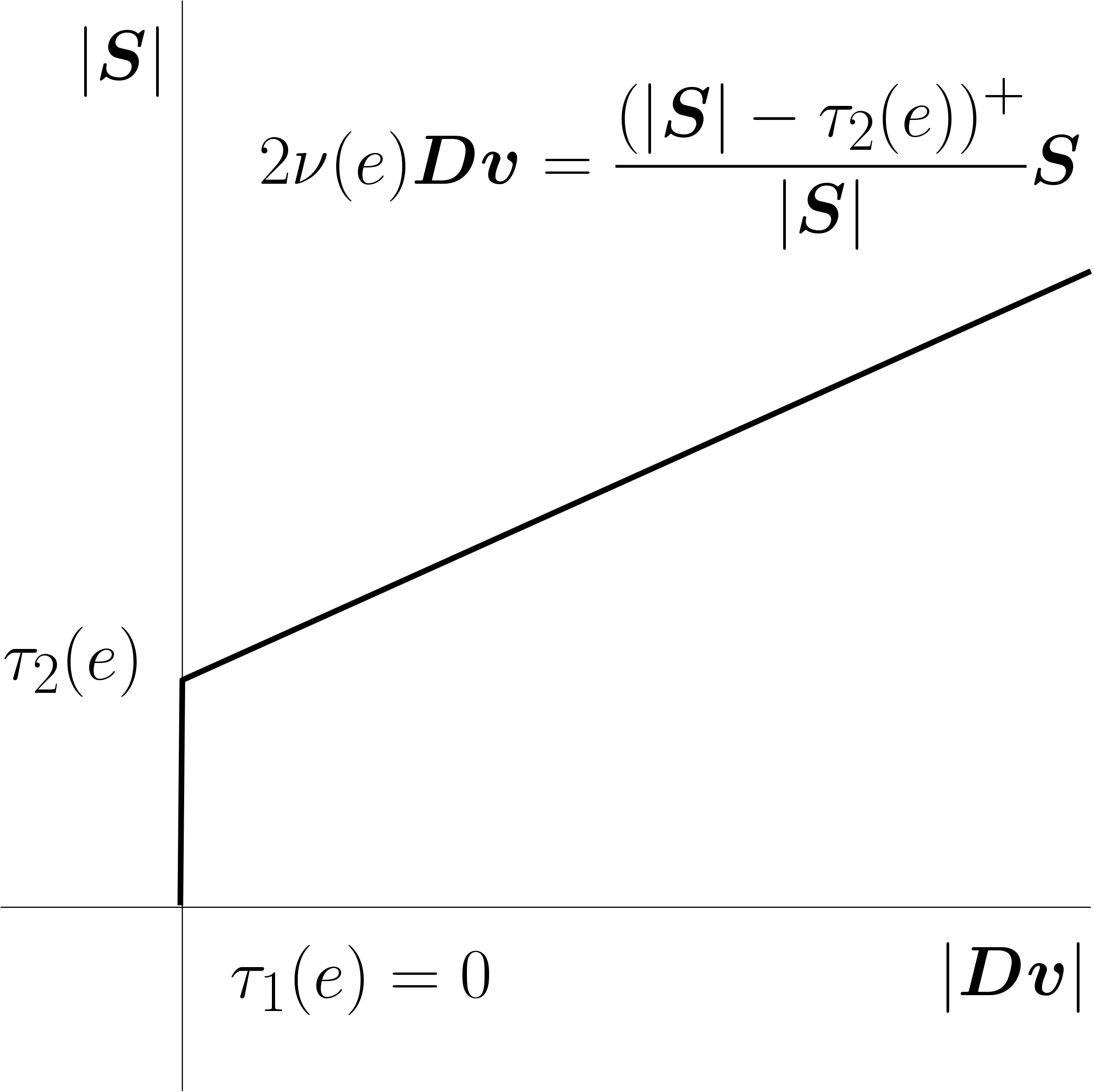}}
  \hfill
  \subfloat{\includegraphics[width=0.3\textwidth]{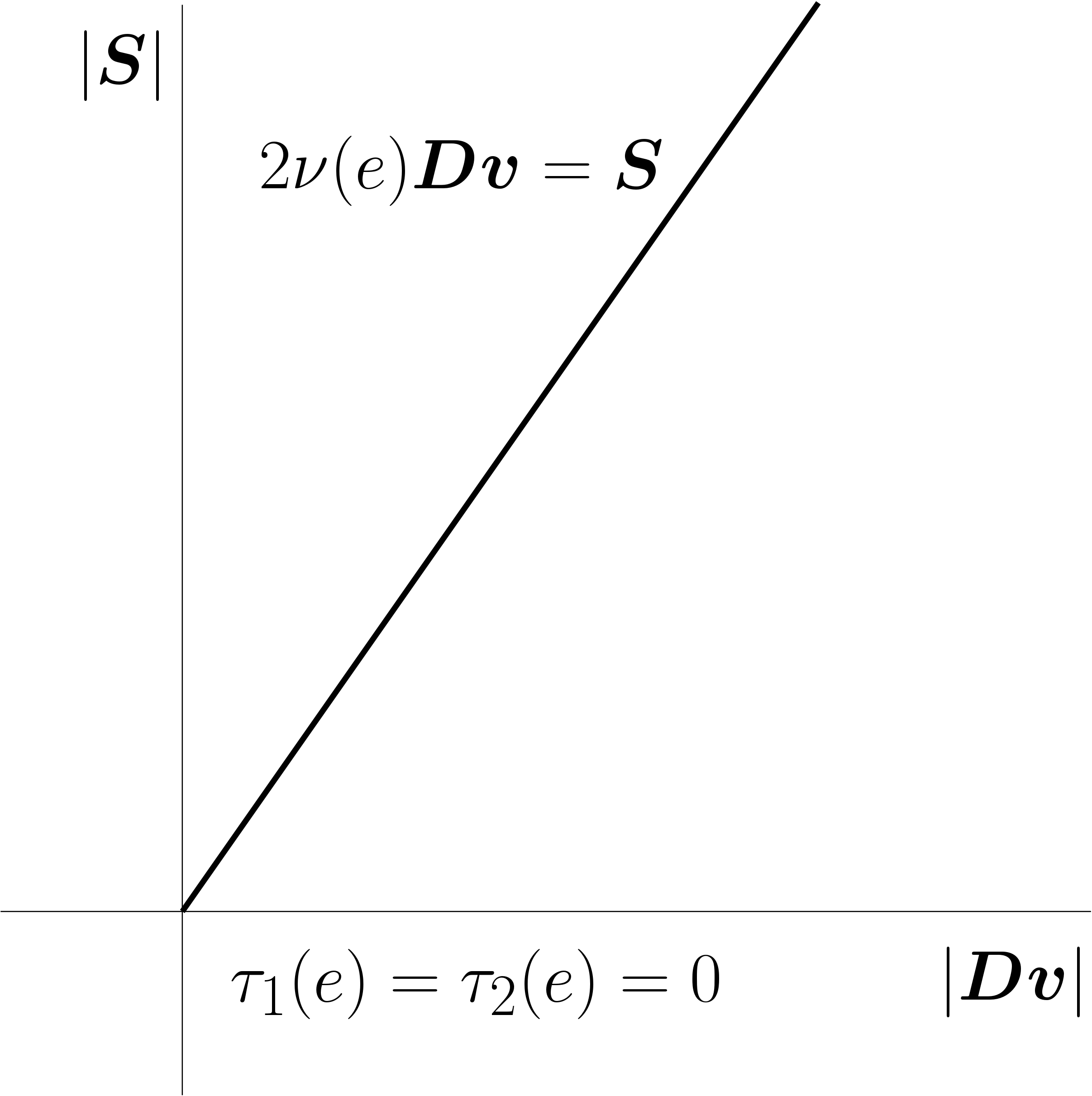}}
  \hfill
  \subfloat{\includegraphics[width=0.33\textwidth]{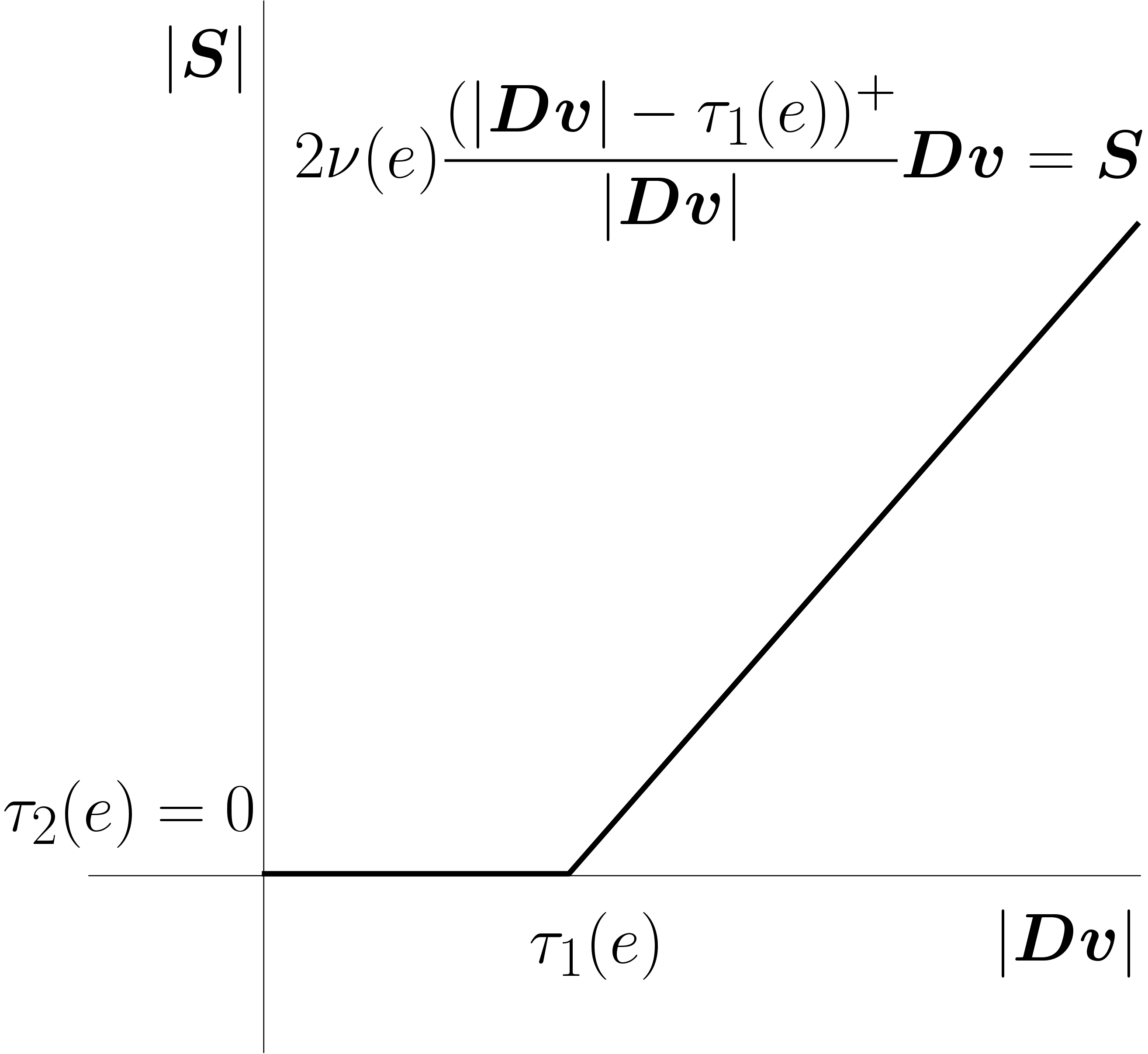}}
  \caption{Practically speaking, we consider these three cases and continuous transitions between them. Note that the slope of the slanted part may differ for different values of $e$.} \label{figura}
\end{figure}

Regarding the boundary conditions, numerous measurements (see below) have documented that not only non-Newtonian, but also Newtonian fluids can evince, in some situations, the so-called \emph{stick-slip boundary condition}. That is, the fluid adheres to the boundary until a certain critical value $\sigma>0$ of the wall shear stress is reached, at which moment the \emph{Navier slip} occurs. Denoting 
$$\s \ddf -(\S \n)_{\tau} \quad \text{on}~\Gamma,$$
the corresponding formula thus reads
\begin{align} \label{priklad}
\gamma \vt = \frac{(|\s|-\sigma)^+}{|\s|} \s \quad \text{on $\Gamma$},
\end{align}
for some \emph{friction coefficient} $\gamma > 0$. Not surprisingly, the critical value $\sigma$ and also $\gamma$ should depend on the temperature; see~\cite{drda2}. 
Many empirical models have been introduced over the years in pursuit of capturing this dependence; see e.g.~\cite{hatz} and references therein for models describing non-Newtonian fluids, in particular for polymer melts. In~\cite{granick}, the authors study the stick-slip boundary condition for Newtonian fluids. However, these models are usually not only very complicated, but also they often do not follow the real response of the fluid very precisely. We believe that the model introduced in our study is considerably simpler and yet more accurate, since the temperature-dependence is inherently imbedded in the activation-coefficient function. Moreover, due to the symmetric character of our boundary condition (see~\eqref{satis2}), the \emph{dual activation coefficient} is able to describe the opposite threshold-slip case, i.e.\ the \emph{perfect-slip--slip} situation. 

Similarly to the above constitutive relation~\eqref{satis1}, the quantities $\s$, $\vt$ and $e$ are coupled by means of a maximal monotone 2-graph parametrized by $e$. The triplet $(\s,\vt,e)$ satisfies the boundary condition if and only if $(\s,\vt,e) \in \B \subset \mathbb{R}^3 \times \mathbb{R}^3 \times \mathbb{R}$ a.e.\ on $\Gamma$, where graph $\B$ is defined via the implicit relation (cf.~\eqref{priklad})
\begin{align} \label{satis2}
(\s,\vt,e)\in\B \quad &\Longleftrightarrow\quad \ge \frac{\left(|\vt|-\sigma_1(e) \right)^+}{|\vt|} \vt= \frac{\left(|\s|-\sigma_2(e) \right)^+}{|\s|} \s.
\end{align}
Function $\gamma(e)$ represents the friction coefficient and functions $\sigma_1(e), \sigma_2(e)$ are another two activation coefficients. If $\sigma_1(e)= \sigma_2(e)= 0$, the Navier slip occurs. The case when only one of $\sigma_1(e), \sigma_2(e)$ is equal to zero represents threshold-slip boundary conditions: whenever $\sigma_1(e)= 0$ and $\sigma_2(e) > 0$, we observe the stick-slip boundary condition, while one calls the boundary condition perfect-slip--slip when $\sigma_1(e)> 0$ and $\sigma_2(e) = 0$. 
We assume that no other cases occur (cf.~\eqref{sucin0tau}), i.e. 
\begin{align} \label{sucin0sigma}
\sigma_1(e) \sigma_2(e)=0 \qquad\text{ for all } e \in \R.
\end{align}
Two limit cases that we are not able to incorporate are the \emph{genuine} perfect slip ($\sigma_1 \to \infty$) and the no-slip ($\sigma_2 \to \infty$) boundary conditions. On the other hand, even if we treated a case trivial with respect to~\eqref{sucin0tau}, i.e.\ $\S=2\ne\D\ve$, it is in general not known how to construct an integrable pressure when the no-slip boundary condition is considered (see~\cite{Frehse2003}). Hence we cannot say that our theory would miss an already solved case, for we always go to great pains to end up with an integrable pressure.


\paragraph{Result.}
Having introduced the model in its entirety, our mission is to develop the long-time and large-data existence theory. Relations \eqref{system1}--\eqref{system10} describe general flows of an incompressible heat-conducting fluid and its specific behavior is characterized by the relations \eqref{satis1} and \eqref{satis2}, determining the actual response of the fluid inside the domain (graph $\A$) and its interaction with the boundary (graph $\B$). A possible presence of the temperature-dependent activation criteria is included via the coefficient functions $\tau_1(e), \tau_2(e), \sigma_1(e)$ and $\sigma_2(e)$. These specify the flow regime as well as the behaviour on the boundary and also control the transition between the different states of the fluid.

We postpone the exact formulation of the existence result to Theorem~\ref{prime} on p.~\pageref{prime}, also because it uses specific notation that we want to introduce first. Suffice it to say that we are able to find a global weak solution, i.e.\ functions of certain regularity satisfying the balance equations in the form of integral identities or an inequality in the case of the relaxed balance of internal energy~\eqref{boie2}.

\paragraph{Discussion.}
The idea of implicit constitutive relations was introduced by Rajagopal in~\cite{rajag} and then thoroughly investigated by Bul\'{i}\v cek et al.\ in~\cite{BGMG, BGMRG} for general maximal monotone graphs (see also M\'alek~\cite{malek2008mathematical}), albeit without the heat effects. In~\cite{SS14}, Bul\'i\v{c}ek and M\'alek developed the existence theory for the stick-slip condition (i.e.~\eqref{priklad}) but still within the purview of isothermal processes only. Recently, the same authors in~\cite{MBthreshold} did enrich their original model with the heat conduction and a temperature-dependent viscosity. In our work, we consider this system further expanded, i.e.\ four thresholds are present in our model: $\tau_1, \tau_2, \sigma_1$ and $\sigma_2$, so that we can activate $\S, \D\ve, \s$ or $\vt$, respectively. In addition, we take into account the temperature-dependent friction $\gamma$ and viscosity $\nu$.

There are a number of works studying the temperature-dependent responses of fluid-like materials. Concerning our relation~\eqref{satis1} given by graph $\A$, it confers a unique possibility to trace the transfer from the yield stress ($\tau_1(e)=0$ and $\tau_2(e)>0$) through the Newtonian response ($\tau_1(e)=\tau_2(e)=0$) up to the inviscid/Euler regime, where the flow remains inviscid if $|\D\ve| \le \tau_1$. Quite remarkably, such temperature-driven viscous/inviscid transitions are not purely academic considerations but they are corroborated by experiments. In particular, one can encounter them in studies dealing with the boundary layers in the flow of almost inviscid fluids (see~\cite{dalibard} or~\cite{Schli}) or with flows of the superfluids, i.e.\ fluids that remain inviscid up to a certain temperature, above which the structure of the flow changes drastically; see~\cite{superfluids}. Furthermore, some fluids remain inviscid in the full range of experimentally observed data -- then we may set $\tau_1$ \emph{extremely large} (that is, beyond the range of experimental devices) to cover also such situations. Proceeding with the limit $\tau_1 \to \infty$ (if allowed) might also suggest a way of including possible \emph{solutions} to Euler equations without the pathological behaviour of those constructed in~\cite{MR3303948} and~\cite{DLSz}.
  
As far as~\eqref{satis2} is concerned, we point out the slip regime of the flow of polymer melts which is demonstrably temperature-dependent. It is studied and described in detail for example in~\cite{hatz}, where two mechanisms of slip are explained: the flow-induced chain detachment/desorption (weak slip) and the chain disentanglement (strong slip). Several models are also listed and referred to in~\cite{hatz}. For example, in~\cite{drda2}, there are presented experimental results focusing on molecular characteristics (especially on molecular weight) and temperature dependence of the stick-slip transition in a capillary flow and general features of high-stress rheology of a series of linear high-density polyethylene (HDPE) resins. The experiments are taken at different temperatures within the range 160--200~\degree C and HDPEs considered are of the molecular weights $M_w=130~500-316~600$. They describe the low-temperature anomaly, i.e.\ a situation in which the polymers deviate from the expected behaviour at the temperature of 160 \degree C. This phenomenon might arise from a flow-induced ordered phase along the wall at such lower temperatures when the wall shear stress reaches a certain critical value. Such a mesophase would introduce a local chain orientation at the PE/wall interface, so that it is then easier for the tethered chains to disentangle from the free chains. In other words, the interfacial chains need not be stretched (shear deformed) as much as when they are in the isotropic liquid phase, and the stick-slip transition would only require a lower critical stress. This serves as an example of a transition where the activation is dependent on the temperature. However, modifying the die wall condition allowed for an interesting observation -- instead of a smooth metallic wall, a rippled surface was used such that the melt in the 
thread valleys was stagnant and a PE/PE interface was achieved. For lower-molecular-weight resins, this PE/PE interface at the slip boundary apparently eliminates the mesophase formation and restores the normal features of the stick-slip transition, which means that the temperature anomaly at 160 \degree C completely disappears in the threaded die. The authors study also other nonstandard behaviour of the HDPEs, as well as different temperature ranges, which are considered e.g.\ in~\cite{drda3}. Moreover, as studied in \cite{granick}, the threshold slip occurs also in flows of Newtonian fluids, although it is not as remarkable as in the non-Newtonian case. But it is yet another viable example on the use of graph $\B$ all the same. For more information about various activations we want to draw attention to \emph{On Classification of Fluids with Activation. Part 1: Incompressible Fluids}, a paper currently in preparation authored by J.~Blechta, J.~M\'alek and K.~R.~Rajagopal.

\begin{figure}[h!]
\begin{center}
\includegraphics[width=0.4\textwidth]{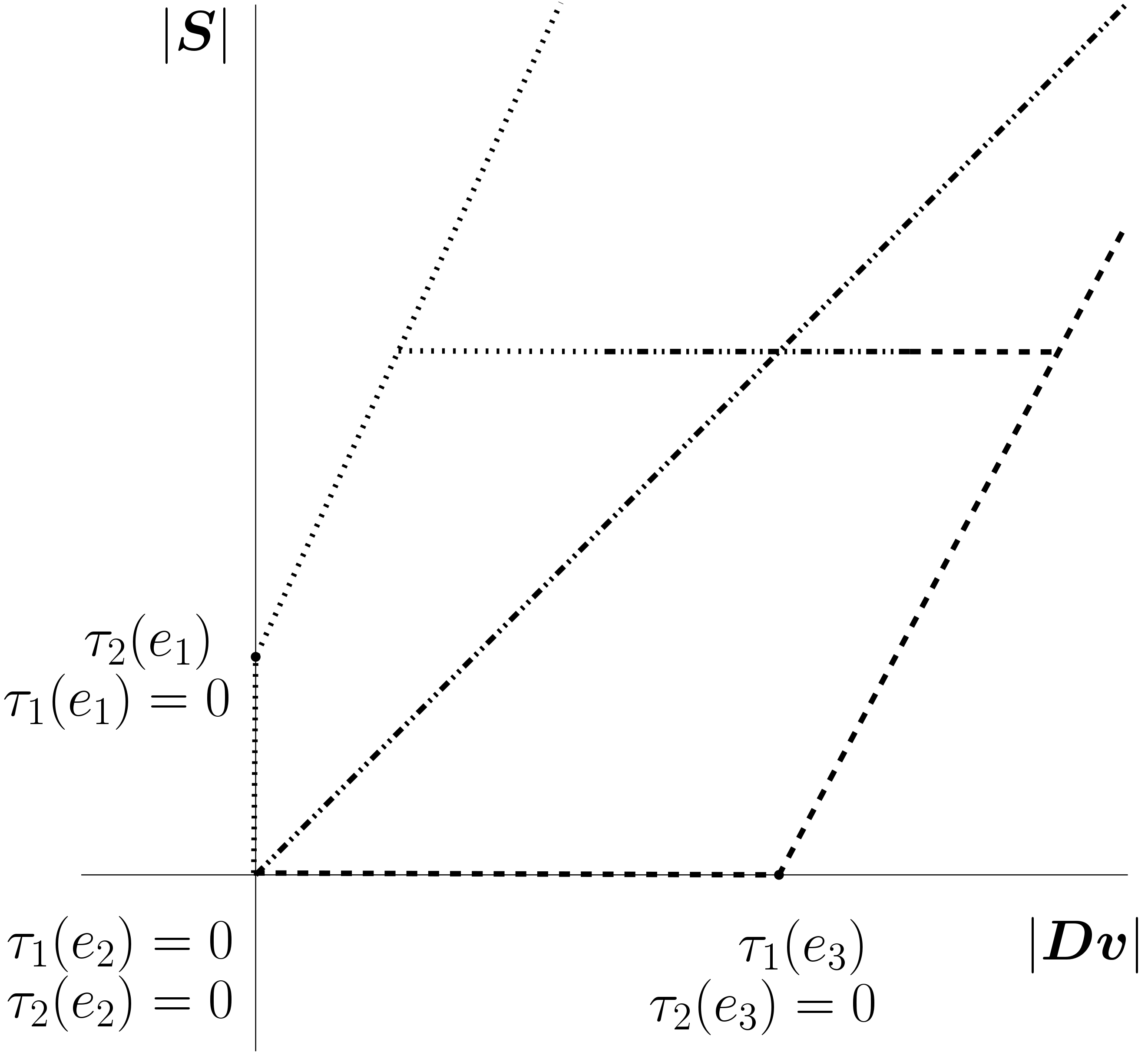}
\end{center}
\caption{An example of the hysteresis-like loop driven purely by the temperature change described by graph $\A$. Three most important situations are captured in the picture. Each dash style represents one constant value of the temperature -- the corresponding curve describes the behaviour of the fluid and its slope is ruled by the viscosity $\nu(e)$ at that temperature. The horizontal line where the styles alter indicates the continuous temperature change.}
\label{loop}
\end{figure}

Another advantage of temperature-dependent parameters is that the resulting structure allows to describe a hysteresis-like loop driven by changes in the temperature, which is one of the motivations for the use of the implicit theory
. Note that in Figure~\ref{loop}, one can see an example of such an application of such a model (as well as a prototypic example of such an hysteresis-like loop). 

Finally, we would like to emphasize that although this paper and its results deal only with non-linearities that can be described by maximal monotone 2-graphs, we are convinced that similarly to~\cite{bulivcek2009navier}, that served as the starting point for studying the complex mathematical theory for heat-conducting fluids in~\cite{BMR09}, the present paper will play the same role in the studies for more complex fluids with even more general classes of activaton criteria.


\paragraph{Paper arrangement.} In the following section, notation is established and assumptions on coefficients are posited. Section~\ref{result} contains the precise statement of our existence result. In section~\ref{prop}, coercivity and monotonicity of graphs $\A$ and $\B$ are shown, approximate graphs $\A^k$ and $\B^k$ are introduced and the fact that they \emph{do approximate} the original graphs is proved. Section~\ref{proofthm} then provides the actual proof of Theorem~\ref{prime}. The paper is concluded by Appendix where one can find a proof of an auxiliary lemma.

\section{Preliminaries}
For $0<t<T$ we write $Q_t\ddf[0,t) \times \om$ and $\Gamma_t \ddf [0,t) \times \partial \om$. As the introduction has given away already, we use simply $Q$ and $\Gamma$ for $Q_T$ and $\Gamma_T$, respectively. Continuous and compact embedding of two normed spaces is denoted by $\hookrightarrow$ and $\hookrightarrow\hookrightarrow$, respectively. No explicit distinction between spaces of scalar- and vector-valued functions will be made. Confusion should never come to pass as we employ small boldfaced letters to denote vectors and bold capitals for tensors. The same applies also to traces of Sobolev functions, which we denote like the original functions. Only when in need, we use $\tr$ for the trace.  For $1\le r\le\infty$ we denote $(L^r(\om), \|\! \cdot \! \|_r)$ and $(W^{1,r}(\om),\|\!\cdot\!\|_{1,r})$ the corresponding Lebesgue and Sobolev spaces. In particular, when an integral norm does not specify over which set it is taken, always $\om$ is implicitly considered. For a Banach space~$X$, the relevant Bochner space is designated by $L^r(0,T;X)$. 
When $r>1$, we set
\begin{align*}
W^{1,r}_{\n}(\om) & \ddf \bigl\{\f \in W^{1,r}(\om) \bigm| \tr \f \cdot \n = 0 \text{ on $ \partial \om$} \bigr\},\\
W^{-1,r}(\om) & \ddf \big( W^{1,\frac{r}{r-1}}(\om) \big)^*,\\
W^{-1,r}_{\n}(\om) & \ddf \big( W^{1,\frac{r}{r-1}}_{\n}(\om) \big)^*,\\
W^{1,r}_{\n,\dir}(\om) & \ddf \bigl\{\f \in W^{1,r}_{\n}(\om) \bigm| \dir \f = 0 \text{ in $\om$} \bigr\},\\
\C([0,T];X) &\ddf \{f \in L^{\infty}(0,T;X)  \bigm| 
 [0,T] \ni t^n \to t \Rightarrow f(t^n) \to f(t) \text{ strongly in $X$}\}, \\
\C_{w}([0,T];X) &\ddf \{f \in L^{\infty}(0,T;X)  \bigm| 
 [0,T] \ni t^n \to t \Rightarrow f(t^n) \to f(t) \text{ weakly in $X$}\}, \\
\C_c^1(\om) &\ddf \bigl\{f \in \C^1(\om) \bigm| \text{$f$ is compactly supported in $\om$} \bigr\}. 
\end{align*}
Sometimes, we will want to emphasize that certain regularity holds up to a given number, save this number, in the following way (see \cite{iwa} for the origin of the notation, even though here we thus define only sets of functions without any ambition to norm them):
\begin{align*}
L^{r)}(0,T;W^{1,r)}(\om)) &\ddf \bigcap_{1 \le s < r } \! L^s(0,T;W^{1,s}(\om)),
\intertext{and analogously the other way round, as in}
L^{(r}(0,T;W^{-1,(r}(\om))&\ddf \bigcup_{ s > r } L^s(0,T;W^{-1,s}(\om)),
\end{align*}
or in estimates (see e.g.~\eqref{ukazka}, \eqref{ukazka2} or~\eqref{ministr}). The positive and negative parts of a real-valued function $f$ are denoted $f^+ \ddf \max \{f,0 \}$ and $f^- \ddf \min \{f,0 \}$, respectively. With $\M(0,T)$ we denote the space of Radon measures on $(0,T)$. The symbol $\cdot$ stands for the scalar product of vectors or tensors, or simply for the product of fractions, depending on the context, and $\otimes$ signifies the tensor product. Generic constants are denoted by~$C$.

The external body forces $\b$ are for the sake of convenience supposed to be zero, i.e. 
\begin{align*} 
\b\ddf\0. 
\end{align*}

We close this preparatory section with requirements that we enforce upon the $e$-dependent coefficients in the definitions of graphs $\A$~\eqref{satis1} and $\B$~\eqref{satis2} and on the coefficient of heat conductivity $\kappa$ (see the introductory section for our motivation thereof):

\begin{ass} \label{assonkoefs}
The coefficients $\nu$, $\gamma$, $\kappa$, $\tau_1$, $\tau_2$, $\sigma_1$ and $\sigma_2$ are assumed to be continuous functions for which there exist $c_0,c_1,c_2 > 0$ such that
\begin{gather} \label{novost}
 0 \le \tau_1(s),\tau_2(s),\sigma_1(s),\sigma_2(s) \le c_0,  \\[4pt] \label{visco}
 c_1 \le \nu(s),\gamma(s),\kappa(s) \le c_2, \\[4pt] \label{eric}
0=\tau_1(s)\tau_2(s)=\sigma_1(s)\sigma_2(s),
\end{gather}
for all $s\in\R$.
\end{ass}
Indeed, there seems to be no logical reason for these coefficients to be defined outside of $\R^+$. We incorporate the possibility just for technical reasons because at the very first approximate level it may not be true that the internal energy is non-negative---the minimum principle begins to hold only after the first passing to limit (see~\cite{bulivcek2009navier})---and then we might conceivably deal with ill-defined arguments. Provided that we were presented with coefficients defined for non-negative values only, we would simply extend them by a constant to be defined everywhere.

\section{The result} \label{result}
Below we state the essence of this paper -- the existence theorem for our system. To get some idea regarding the professed regularity of individual quantities, see the uniform estimate~\eqref{ministr} and a rough sketch of its proof on p.~\pageref{prvniodhad}.

\begin{thm} \label{prime}
Let $T>0$, $\om \subset\R^3$ have a $\C^{1,1}$ boundary and Assumption~\ref{assonkoefs} stand valid. Consider initial conditions $\ve_0 \in \Ln(\om)$ and $e_0 \in L^1(\om)$, $e_0 \geq c_3$ a.e.\ in $\om$ for a certain positive constant $c_3>0$. Then there exists a weak solution to the system given by equations~\eqref{system1}--\eqref{system3}, \eqref{boie2}--\eqref{system10}, \eqref{satis1} and~\eqref{satis2}, i.e.\ a quintuplet $(\ve,p,\S,\s,e)$ fulfilling
\begin{align}
\ve&\in \C_w([0,T];L^2(\om)) \cap L^2(0,T;\Wnd(\om)), \nonumber\\
\dt \ve & \in L^{\frac{5}{3}}(0,T;W_{\n}^{-1,\frac53}(\om)), \nonumber\\
p &\in L^{\frac53}(Q), \nonumber\\
\S &\in L^2(Q), \nonumber\\
\s &\in L^2(\Gamma), \nonumber\\
e &\in \Li(0,T;L^1(\om)) \cap L^{\mfrac{5}{4}}(0,T;W^{1,\mfrac{5}{4}}(\om)),\nonumber\\
e &\geq c_3 \text{ a.e.\ in $Q$},\nonumber\\
\dt e &\in \M(0,T;W^{-1,\mfrac{10}{9}}(\om)),\nonumber\\
\dt E &\in L^{\mfrac{10}{9}}(0,T;W^{-1,\mfrac{10}{9}}(\om)), \label{primea}
\end{align}
having denoted $E \ddf \frac{|\ve|^2}{2} + e$. These functions satisfy for a.e.\ $t\in(0,T)$, all $\w \in \Wnn^{1,\frac52}(\om)$ and $u\in W^{1,\infty}(\om)$ the weak formulation
\begin{gather} \label{pprvni}
\langle \dt \ve,\w \rangle + \ii \Bigl( \S - p \I - \vov \Bigr)\cdot \nn \w \, dx + \iii \s \cdot \w \, dS = 0,  \\ \label{ddruha}
\langle \dt E,u \rangle + \ii \Bigl( \S\ve - (E+p)\ve + \ke\nn e \Bigr) \cdot \nn u \, dx + \iii \s \cdot \ve u \, dS = 0,
\end{gather}
where $(\S,\D\ve,e)$ and $(\s,\vt,e)$ satisfy~\eqref{satis1} and \eqref{satis2}, respectively.

In addition, we have the inequality
\begin{align} \label{ttreti}
\langle \dt e,u \rangle + \ii \bigl( -e \ve + \ke\nn e \bigr)\cdot \nn u \, dx &\geq \ii (\S \cdot\D\ve) u\, dx,
\end{align}
satisfied in the sense of measures for every non-negative $u\in W^{1,\infty}(\om)$.

The initial data are being attained in the form
\begin{align} \label{initc}
\lim_{t\to 0_+ }  \| \ve(t)-\ve_0\|_2+ \esslim_{t\to 0_+ }\|e(t)-e_0\|_1 =0.
\end{align}
\end{thm}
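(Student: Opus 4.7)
The plan is a multi-level approximation reflecting the two chief difficulties: the implicit constitutive laws \eqref{satis1}--\eqref{satis2} and the low regularity of the convective term $\dir(\vov)$. At the innermost level I would replace the maximal monotone graphs $\A$ and $\B$ by single-valued, strictly monotone, coercive and bounded approximations $\A^k$, $\B^k$ inducing explicit constitutive laws $\Sk = \Sk(\D\vk, \ek)$ in $\om$ and $\sk = \sk(\vk_\tau, \ek)$ on $\partial\om$, whose construction is announced as Section~\ref{prop}. Simultaneously the convective nonlinearity is truncated via $\vok$ with a smooth cut-off $\Phi_k$, turning the momentum balance into a semilinear parabolic system. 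For fixed $k$, existence of a Galerkin solution $(\vk, \ek)$ in a basis of $\Wnd(\om)$ coupled with a parabolic equation for $\ek$ with an $L^2$ source follows from a Carath\'eodory ODE system together with a Schauder-type fixed-point argument in the temperature variable.

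Next comes the derivation of $k$-uniform estimates. Testing momentum by $\vk$ and using coercivity of $\A^k$, $\B^k$ together with Korn's inequality gives $\vk$ bounded in $L^\infty(0,T;L^2(\om)) \cap L^2(0,T;\Wnd(\om))$, and consequently $\Sk \in L^2(Q)$, $\sk \in L^2(\Gamma)$. The balance of total energy tested by~$1$ and integrated in time yields $\ek \in L^\infty(0,T;L^1(\om))$; a Boccardo--Gallou\"et truncation applied to the internal-energy equation promotes this to $\ek \in L^s(0,T;W^{1,s}(\om))$ for every $s<\nicefrac{5}{4}$, whence $\dt\ek$ and $\dt E^k$ live in the distributional spaces listed in \eqref{primea}. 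A minimum principle for $\ek$, obtained by testing the internal-energy equation by $(c_3 - \ek)^+$ and exploiting $\Sk\cdot\D\vk\ge 0$, secures the lower bound $\ek\ge c_3$. Finally, an integrable pressure $\pk \in L^{\nicefrac{5}{3}}(Q)$ is reconstructed by inverting the divergence through a Bogovskii-type operator; this step leans essentially on the slip-type boundary condition, the no-slip situation being precisely the one excluded by our thresholds and known to be problematic for an $L^1$ pressure.

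With the uniform bounds secured, Aubin--Lions delivers $\vk \to \ve$ strongly in $L^r(Q)$ for $r<\nicefrac{10}{3}$, strong trace convergence $\vk\to\ve$ in $L^2(\Gamma)$, and $\ek \to e$ strongly in $L^1(Q)$ and almost everywhere. The decisive step is identifying the weak $L^2$ limits of $\Sk$, $\sk$ (which we still call $\S$, $\s$) with selections of $\A(\cdot,\cdot,e)$ and $\B(\cdot,\cdot,e)$. Here the $L^\infty$-truncation technique enters: taking $\vk_L$ to be a divergence-preserving Lipschitz truncation of $\vk$ at level $L$, testing the approximate momentum balance by $(\vk_L - \ve_L)\psi$ with appropriate time cut-offs, and letting $k\to\infty$ first and $L\to\infty$ afterwards, one obtains $\limsup_k \iq \Sk\cdot\D\vk \le \iq \S\cdot\D\ve$. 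A Minty-type monotonicity argument then yields $(\S, \D\ve, e) \in \A$ a.e.\ in $Q$; the legitimacy of passing the monotonicity through the moving parameter $e$ rests on the a.e.\ convergence of $\ek$ and the continuity of $\nu, \tau_1, \tau_2$. An analogous Minty argument on $\Gamma$, using the strong trace convergence of $\vk$, identifies $(\s, \vt, e) \in \B$. The total-energy equality \eqref{ddruha} is preserved because the formal cancellation between the convective term and the $\ve$-transport term in the energy balance survives weak limits; the internal-energy inequality \eqref{ttreti} follows from the weak lower semicontinuity of $u \mapsto \iq (\Sk\cdot\D\vk)\, u\, dx$ against non-negative $u$, itself a direct consequence of the monotonicity of $\A^k$.

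The main obstacle I anticipate is the $L^\infty$-truncation passage in the presence of the \emph{simultaneously moving} graphs $\A^k \to \A$ and parameters $\ek\to e$: the Minty monotonicity trick has to be applied graph-wise in a continuously varying $e$, and the thresholds $\tau_i(\ek), \sigma_i(\ek)$ must be shown to converge pointwise to limits still satisfying the structural exclusions \eqref{sucin0tau}, \eqref{sucin0sigma}. Coordinating this with the integrable-pressure reconstruction, with the strong trace convergence demanded by the boundary Minty argument, and with the attainment of the initial data \eqref{initc}, recovered via lower semicontinuity of the energy norm together with weak continuity of $\ve$ and $E$ at $t=0_+$, is where the bulk of the technical work will reside.
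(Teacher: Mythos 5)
Your overall architecture --- single-valued approximation of the graphs $\A$, $\B$ together with truncation of the convective term, $k$-uniform energy and $L^1$-type estimates, identification of $\S$ and $\s$ by a Minty argument that tolerates the moving parameter $e^k\to e$, and recovery of the initial data --- coincides with the paper's. The genuine gap is in the decisive identification step. You announce the $L^{\infty}$-truncation technique but then describe a \emph{divergence-preserving Lipschitz truncation} $\vk_L$ and a test function $(\vk_L-\ve_L)\psi$: these are two different techniques, and no solenoidal truncation is constructed or cited (building one is a substantial result in its own right). The paper uses the genuine $L^\infty$-truncation: the test function is the pointwise truncation $\wk$ of the \emph{difference} $\vk-\ve$ at a level $\lambda^k=N^{i_k}$ selected by a pigeonhole argument so that $\int_{Q^k_{i_k}}I^k\le C/N$. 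This $\wk$ is \emph{not} divergence-free, so the pressure does not drop out; the term $\iq\pk_1\dir\wk$ must be controlled, and this is precisely why the paper first decomposes $\pk=\pk_1+\pk_2$ with $\pk_1$ bounded in $L^2(Q)$ (generated by $\Sk$ and $\sk$ through Neumann problems) and $\pk_2$ bounded in $L^{\frac54}(0,T;W^{1,\frac54}(\om))$ (generated by the convective term); the raw bound $\pk\in L^{\nicefrac53}(Q)$ is explicitly insufficient here. Your sketch invokes the pressure only for its existence (via a Bogovskii operator --- the paper instead uses a quasicompressible approximation and tests with a gradient $\nn h$ solving a Neumann problem, so that $\langle\dt\vk,\nn h\rangle$ vanishes, which a Bogovskii test function would not achieve), but never the decomposition on which the truncation step actually rests. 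Either supply a solenoidal truncation or supply the pressure decomposition; as written the step would fail.

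Two smaller inaccuracies. The truncation argument does not deliver $\limsup_k\iq\Sk\cdot\D\vk\le\iq\S\cdot\D\ve$ on all of $Q$, only on an exhausting family $E^j\subset Q$ obtained via Chebyshev, a.e.\ convergence of $(\Sk-\bs)\cdot(\D\vk-\D\ve)$ and Egoroff; the convergence lemma must therefore be applied set-wise. Relatedly, the inequality~\eqref{ttreti} is not a ``direct consequence of the monotonicity of $\A^k$'': it follows from the nonnegativity of $\Sk\cdot\D\vk$ combined with the weak $L^1(E^j)$ convergence $\Sk\cdot\D\vk\to\S\cdot\D\ve$ furnished by the convergence lemma on the good sets. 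Finally, strong $L^1$ attainment of $e_0$ needs more than weak continuity of $E$ at $t=0$; the paper combines $\lim_{t\to0_+}\ii e(t)\,dx=\ii e_0\,dx$ with the auxiliary inequality~\eqref{treti} for $\sqrt{\ek}$ to control $\|\sqrt{e(t)}-\sqrt{e_0}\|_2$.
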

As a sidenote, our proof would work just fine for any $\b \in L^{2}(0,T;W^{-1,2}_{\n}(\om))$; one would only change regularity~\eqref{primea} to $\dt E \in L^1(0,T;W^{-1,\mfrac{10}{9}}(\om))$.

\section{Properties and approximations of the implicit relations} \label{prop}
Listing the qualities of graphs $\A$ and $\B$ crucial in the proof of Theorem~\ref{prime}, let us begin with an easy observation about their coercivity and monotonicity. Since the situation for $\B$ is by its definition quite analogous to that of $\A$, we will explicitly deal only with the latter:

\begin{lem}
Let graph $\A$ be of the form~\eqref{satis1} and Assumption~\ref{assonkoefs} hold. Then there exist $\alpha, \beta >0$ such that
\begin{gather}
\S \cdot \D \geq \alpha (|\S|^2+|\D|^2) - \beta, \label{koer} \\[4pt]
(\S - \S') \cdot (\D-\D') \geq 0, \label{mon} 
\end{gather}
 for any $(\S,\D,e), (\S',\D',e) \in \A$.
\end{lem}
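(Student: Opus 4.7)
The leverage point is \eqref{eric}, which forces at every $e\in\R$ at least one of $\tau_1(e),\tau_2(e)$ to vanish. I would split into these two cases and show that, in each of them, graph $\A$ (with $e$ fixed) is precisely the subdifferential of an explicit convex function $h_e$ of $\D$. Both \eqref{koer} and \eqref{mon} then drop out readily. Because \eqref{satis2} has the identical structural form as \eqref{satis1}, the argument for $\B$ requires only cosmetic relabelling ($\nu\leftrightarrow\gamma$, $\tau_i\leftrightarrow\sigma_i$, $\D\ve\leftrightarrow\vt$, $\S\leftrightarrow\s$), as the lemma statement already indicates.

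\paragraph{Identifying the convex potential.} In the case $\tau_1(e)=0$ the left-hand side of \eqref{satis1} reduces to $2\nu(e)\D\ve$, which is nonzero whenever $\D\ve\ne\0$. Comparing directions and magnitudes against the right-hand side then yields
$$\S = 2\nu(e)\D\ve + \tau_2(e)\frac{\D\ve}{|\D\ve|} \quad\text{if }\D\ve\ne\0,$$
and $|\S|\le\tau_2(e)$ if $\D\ve=\0$; this is exactly $\S\in\partial h_e(\D\ve)$ with the convex $h_e(\D)\ddf\nu(e)|\D|^2+\tau_2(e)|\D|$. In the symmetric case $\tau_2(e)=0$ one obtains $\S = 2\nu(e)(|\D\ve|-\tau_1(e))^+\D\ve/|\D\ve|$, which is $\nabla h_e(\D\ve)$ for the convex $h_e(\D)\ddf\nu(e)\bigl((|\D|-\tau_1(e))^+\bigr)^2$ (composition of the convex non-decreasing map $r\mapsto ((r-\tau_1)^+)^2$ with $|\cdot|$).

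\paragraph{Deriving the two inequalities.} Monotonicity \eqref{mon} is then immediate: since the two triples in the hypothesis share the \emph{same} $e$, both $(\D,\S)$ and $(\D',\S')$ belong to the graph of one and the same monotone operator $\partial h_e$, whence $(\S-\S')\cdot(\D-\D')\ge0$. For coercivity \eqref{koer} I would simply compute $\S\cdot\D$ in each case. When $\tau_1(e)=0$, one reads off $\S\cdot\D=2\nu(e)|\D|^2+\tau_2(e)|\D|\ge 2c_1|\D|^2$, together with the upper bound $|\S|\le 2c_2|\D|+c_0$, so that $|\S|^2\le C(|\D|^2+1)$ with $C=C(c_0,c_2)$. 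When $\tau_2(e)=0$, I get $\S\cdot\D=2\nu(e)(|\D|-\tau_1(e))^+|\D|\ge c_1|\D|^2 - C$ via Young's inequality applied to the cross term $2\nu(e)\tau_1(e)|\D|$, and $|\S|\le 2c_2|\D|$. In either case selecting $\alpha>0$ small enough and $\beta$ large enough in terms of $c_0,c_1,c_2$ yields \eqref{koer}.

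\paragraph{Main obstacle.} There is no genuinely hard step; the analysis is a routine case distinction. The only delicate point is the degeneration of the identity in \eqref{satis1} to $0=0$ on the set $\{\D\ve=\0\}$ (when $\tau_1=0$) or on $\{|\D\ve|\le\tau_1(e)\}$ (when $\tau_2=0$), where $\S$ is not pinned down pointwise. The subdifferential viewpoint bypasses this without ad hoc sign-chasing and, being insensitive to whether one works in $\Omega$ or on $\partial\Omega$, transfers verbatim to graph $\B$.
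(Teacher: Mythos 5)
Your proof is correct and follows essentially the same route as the paper's: both arguments split into the two cases permitted by \eqref{eric}, reduce the graph in each case to an explicit (possibly multivalued) map, and then obtain \eqref{koer} by direct computation and \eqref{mon} from the monotonicity of that map. The only difference is cosmetic: in the Bingham case $\tau_1(e)=0$ the paper writes $\D$ as a single-valued function of $\S$ and estimates from that side, whereas you invert the relation and package $\S$ as the subdifferential of a convex potential in $\D$ --- a tidy way to handle the indeterminacy of $\S$ on $\{\D=\0\}$, but not a genuinely different argument.
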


\begin{proof}
Let us first consider $\tau_1(e)=0$, i.e. $$\D = \frac{1}{2\nu(e)} \cdot \frac{(|\S| - \ted)^+}{|\S|}\S.$$ 
Then, from~\eqref{novost} and~\eqref{visco},
\begin{equation*}
\S \cdot \D = \frac{|\S|}{2\nu(e)} (|\S| - \ted)^+ \geq \frac{1}{2\nu(e)} \left( (|\S| - \ted)^+ \right)^2 \geq \frac{1}{2 c_2} \left( (|\S| - c_0)^+ \right)^2 \geq \frac{1}{2 c_2} \left( \frac{|\S|^2}{4} - c_0^2\right).
\end{equation*}
Since also $$|\D|^2 = \frac{1}{(2\nu(e))^2} \left( (|\S| - \ted)^+ \right)^2,$$ we obtain simultaneously
\begin{equation*}
\S \cdot \D \geq 2\nu(e) |\D|^2 \geq 2 c_1 |\D|^2,
\end{equation*}
yielding \eqref{koer} in the case $\tau_1(e) =0$.

As for \eqref{mon}, the mapping $$ \Aa\longmapsto \frac{(|\Aa| - \ted)^+}{|\Aa|}\Aa, \qquad \Aa \in \R^{3\times 3},$$ is monotone and $\nu(e) > 0$, whence
\begin{equation*}
(\S - \S') \cdot (\D-\D')= \frac{1}{2\nu(e)} (\S - \S') \cdot \left( \frac{(|\S| - \ted)^+}{|\S|}\S - \frac{(|\S'| - \ted)^+}{|\S'|}\S' \right) \geq 0.
\end{equation*}
Should $\tau_1(e)>0$, then $\tau_2(e)=0$ by~\eqref{eric} and we would proceed completely analogously. 
\end{proof}

Identifying weak limits of non-linear quantities in the proof of Theorem~\ref{prime}, we will need the following convergence lemma, whose version without the internal energy has become quite standard (see~\cite{BGMG}). In our case, the addition of a compact sequence $(e^k)_k$ makes the situation somewhat trickier, however. Although stated for $\A$, the statement is equally valid also for $\B$. 

\begin{lem} \label{converlemma}
Let $\A$ be of the form~\eqref{satis1} and $U \subset Q$ be measurable. Consider sequences $(\S^k)_k$, $(\D^k)_k$ and $(e^k)_k$ of measurable functions on $U$, satisfying
 \begin{align*}
  &&(\S^k,\D^k,e^k) &\in \A &&\text{a.e.\ in $U$ for all $k\in\N$}, 
  \\
  && \S^k &\rightarrow \S && \text{weakly in $L^2(U)$}, 
  \\
  && \D^k &\rightarrow \D && \text{weakly in $L^{2}(U)$},
  \\
  && e^k &\rightarrow e && \text{a.e.\ in $U$}, 
  \\
  &&\limsup_{k \to \infty} \int_U \S^k \cdot \D^k &\le \int_U \S \cdot \D.&& 
 \end{align*}
Then $(\S,\D,e) \in \A$ a.e.\ in $U$ and $\S^k \cdot \D^k \to \S \cdot \D$ weakly in $L^1(U)$.
\end{lem}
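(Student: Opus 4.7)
My plan is to run a parametric Minty monotonicity argument with the internal energy playing the role of the parameter. First, I would observe that for each fixed $e\in\R$ the section $\A_{e}\ddf\{(\S,\D) : (\S,\D,e)\in\A\}$ is a maximal monotone graph in $\R^{3\times 3}\times\R^{3\times 3}$ (essentially the content of the preceding lemma), and combining \eqref{satis1} with \eqref{eric} yields the unified parametrization
\begin{equation*}
\Psi(\S,e)\ddf\Bigl(\tej+\frac{(|\S|-\ted)^+}{2\ne}\Bigr)\frac{\S}{|\S|},\qquad\S\neq\0,
\end{equation*}
which is jointly continuous on $(\R^{3\times 3}\setminus\{\0\})\times\R$ and bounded by $c_0+|\S|/(2c_1)$ thanks to Assumption~\ref{assonkoefs}. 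Note that $(\0,\0,e)\in\A$ for every $e$, and $\S\cdot\Psi(\S,e)\geq 0$ is immediate from the formula.

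Next I would pick an arbitrary pair $(\tilde\S,\tilde\D)\in L^2(U)^2$ with $(\tilde\S(x),\tilde\D(x),e(x))\in\A$ a.e.\ and transport it along $(\ek)_k$ by setting $\tilde\D^k(x)\ddf\Psi(\tilde\S(x),\ek(x))$ on $\{\tilde\S\neq\0\}$, $\tilde\D^k(x)\ddf\min(\tau_1(\ek(x)),|\tilde\D(x)|)\tilde\D(x)/|\tilde\D(x)|$ on $\{\tilde\S=\0,\,\tilde\D\neq\0\}$ (admissible since any $\D$ with $|\D|\leq\tej$ belongs to $\A_e$ whenever $\S=\0$), and $\tilde\D^k(x)\ddf\0$ otherwise. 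Then $(\tilde\S,\tilde\D^k,\ek)\in\A$ a.e.; continuity of the coefficient functions together with $\ek\to e$ a.e.\ delivers $\tilde\D^k\to\tilde\D$ a.e., and the uniform $L^2$-dominating bound upgrades this to strong $L^2(U)$ convergence via Lebesgue's theorem. Inserting this into the pointwise monotonicity $(\S^k-\tilde\S)\cdot(\D^k-\tilde\D^k)\geq 0$ and integrating over $U$ yields
\begin{equation*}
\int_U \S^k\cdot\D^k \geq \int_U \S^k\cdot\tilde\D^k + \int_U \tilde\S\cdot\D^k - \int_U \tilde\S\cdot\tilde\D^k.
\end{equation*}
Passing $k\to\infty$---the $\limsup$ hypothesis bounds the left-hand side from above; weak--strong pairing handles the first right-hand term; weak convergence of $\D^k$ handles the second; strong convergence handles the third---produces $\int_U(\S-\tilde\S)\cdot(\D-\tilde\D)\geq 0$ for every admissible test pair. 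A standard measurable-selection argument, producing a uniformly bounded pointwise better test on any putative positive-measure set where $(\S,\D,e)\notin\A$ (extended by $(\0,\0)$ on the complement and exploiting absolute continuity of $\int|\S\cdot\D|$ to absorb the uncontrolled leftover), then forces $(\S,\D,e)\in\A$ a.e.\ in $U$.

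Finally, for the weak $L^1$ convergence, I would take the now-admissible choice $(\tilde\S,\tilde\D)\ddf(\S,\D)$ and rerun the chain of inequalities on an arbitrary measurable $V\subset U$ to obtain $\liminf_k\int_V\S^k\cdot\D^k\geq\int_V\S\cdot\D$. Choosing $V=U$ and combining with the $\limsup$ hypothesis gives $\lim_k\int_U\S^k\cdot\D^k=\int_U\S\cdot\D$; writing $\int_V=\int_U-\int_{U\setminus V}$ and using the $\liminf$ bound on $U\setminus V$ produces the matching $\limsup$, hence $\lim_k\int_V\S^k\cdot\D^k=\int_V\S\cdot\D$ for every measurable $V\subset U$. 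Coupled with the pointwise non-negativity $\S^k\cdot\D^k\geq 0$ (read off from the formula for $\Psi$), this setwise convergence to an $L^1$-limit provides equi-integrability, and Dunford--Pettis concludes the weak $L^1(U)$ convergence. The main technical obstacle is the construction of $\tilde\D^k$ in the middle paragraph: it must be simultaneously admissible for $\ek$ and strongly $L^2$-convergent to $\tilde\D$, which is precisely what condition \eqref{eric} buys us by forcing single-valuedness in one direction and thereby enabling the explicit formula for $\Psi$.
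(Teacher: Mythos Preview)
Your argument is essentially correct and takes a genuinely different route from the paper. The paper splits $U=U_1\cup U_2$ according to whether $\tau_1(e)=0$ or $\tau_1(e)>0$, writes the graph as a single-valued function ($\D=\D(\S)$ on $U_1$, $\S=\S(\D)$ on $U_2$), and then runs Minty's trick directly on each piece: the key technical work is showing that the ``cross terms'' $J_1^k,J_2^k,J_4^k,J_5^k$ arising from the mismatch between $\tau_i(e^k)$ and $\tau_i(e)$ vanish in $L^1$, using the $1$-Lipschitz property of $x\mapsto(a-x)^+$. Your approach instead packages the graph into the unified selection $\Psi(\S,e)$ and transports the \emph{test pair} along $e^k$; this is more elegant in that it avoids the case split and makes the role of the parameter $e$ transparent, and it would generalize more readily to other parametrized maximal monotone families. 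For the second conclusion, the paper obtains the stronger statement $(\S^k-\S)\cdot(\D^k-\D)\to 0$ strongly in $L^1(U)$, from which weak $L^1$ convergence of $\S^k\cdot\D^k$ is immediate; your Dunford--Pettis route via setwise convergence and the sign $\S^k\cdot\D^k\geq 0$ is a valid alternative.

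One caveat: your identification step is under-argued. The parenthetical device---extend the ``better test'' by $(\0,\0)$ on the complement of the bad set and absorb the leftover $\int_{U\setminus W}\S\cdot\D$ via absolute continuity---does not obviously close the contradiction, because that leftover integral is over a \emph{large} set and you have no a~priori sign for $\S\cdot\D$ (only for $\S^k\cdot\D^k$). What you really need is that the Nemytskii realization of the family $\{\A_{e(x)}\}$ is maximal monotone on $L^2(U)\times L^2(U)$; this is true here, but the clean way to see it with your machinery is to run the Minty perturbation directly: on $U_1$ take $\tilde\S=\S\pm\e\CC$ with $\tilde\D=\Psi(\tilde\S,e)$, and on $U_2$ swap the roles (this is exactly what the paper does, and your $\Psi$ makes it a one-liner). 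With that adjustment the proof is complete.
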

\begin{proof}
This is a non-trivial modification of the convergence lemma given in~\cite{BGMG} with the first foray into the temperature-dependent territory in~\cite{MBthreshold}. The proof to Lemma~\ref{converlemma} can be found in Appendix, p.~\pageref{prvnilemma}.
\end{proof}

When proving Theorem~\ref{prime}, our starting point (one of them, actually; see the following section) is modifying graphs $\A$ and $\B$ to make them functional graphs as follows: Let $k \in \mathbb{N}$. Graph $\A$ will be approximated by $\A^k \subset \R^{3\times 3}\times\R^{3\times 3} \times \R$, defined through
\begin{align}\label{grafprvni}
(\S,\D,e)\in\A^k \quad \Longleftrightarrow \quad \S = \min \Big\{ k+2\ne, \frac{2\ne(|\D|-\tej)^++\ted}{|\D|} \Big\} \D.
\end{align}
where $(\S,\D,e)\in \R^{3\times 3}\times\R^{3\times 3} \times \R$. Note that in this approximation, $\S$ has become a continuous function of $(\D,e)$ and we define $$\S^k(\D,e) \ddf \S.$$ 
\begin{figure}[!tbp]
  \centering
  \subfloat{\includegraphics[width=0.53\textwidth]{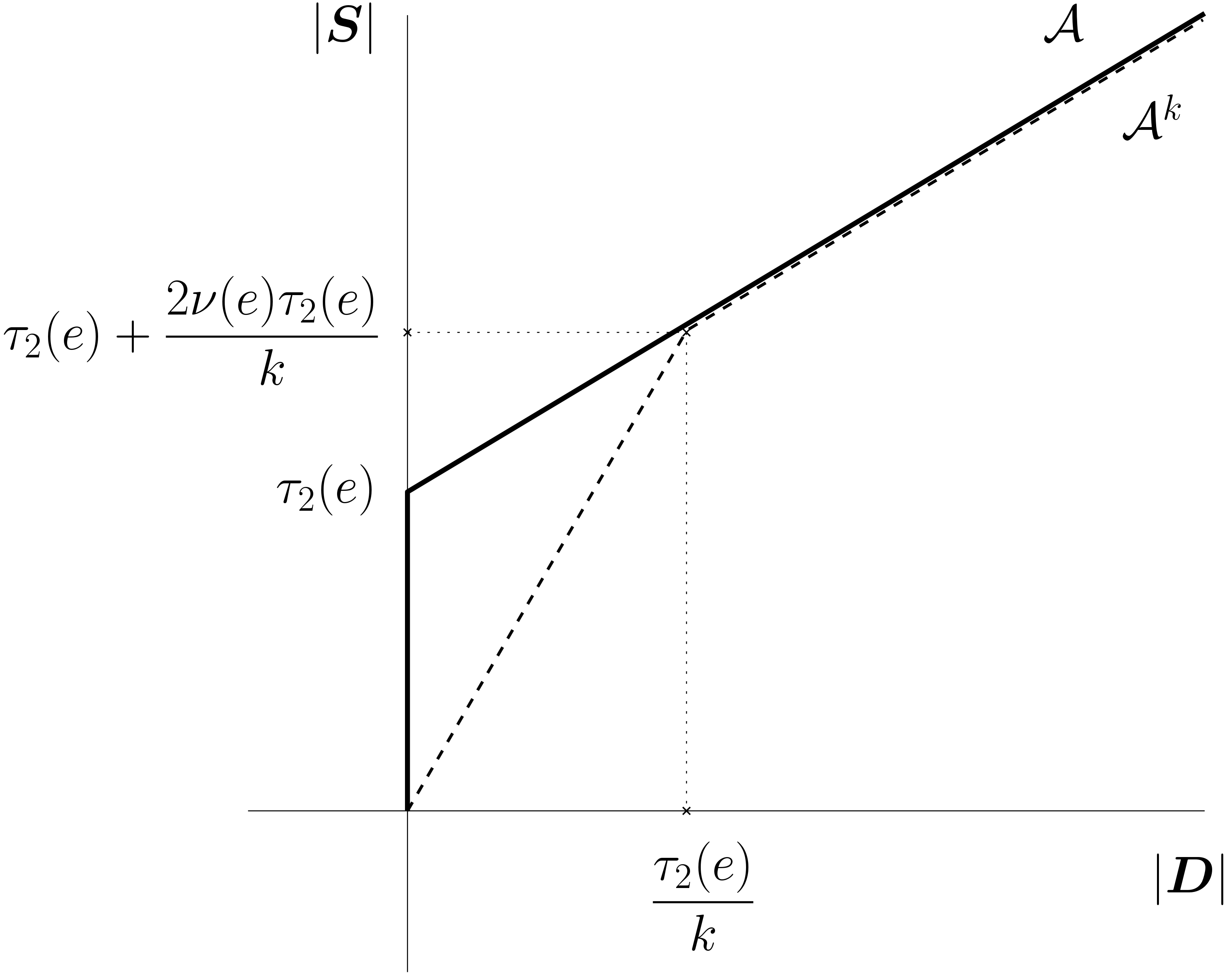}}
  \hfill
  \subfloat{\includegraphics[width=0.42\textwidth]{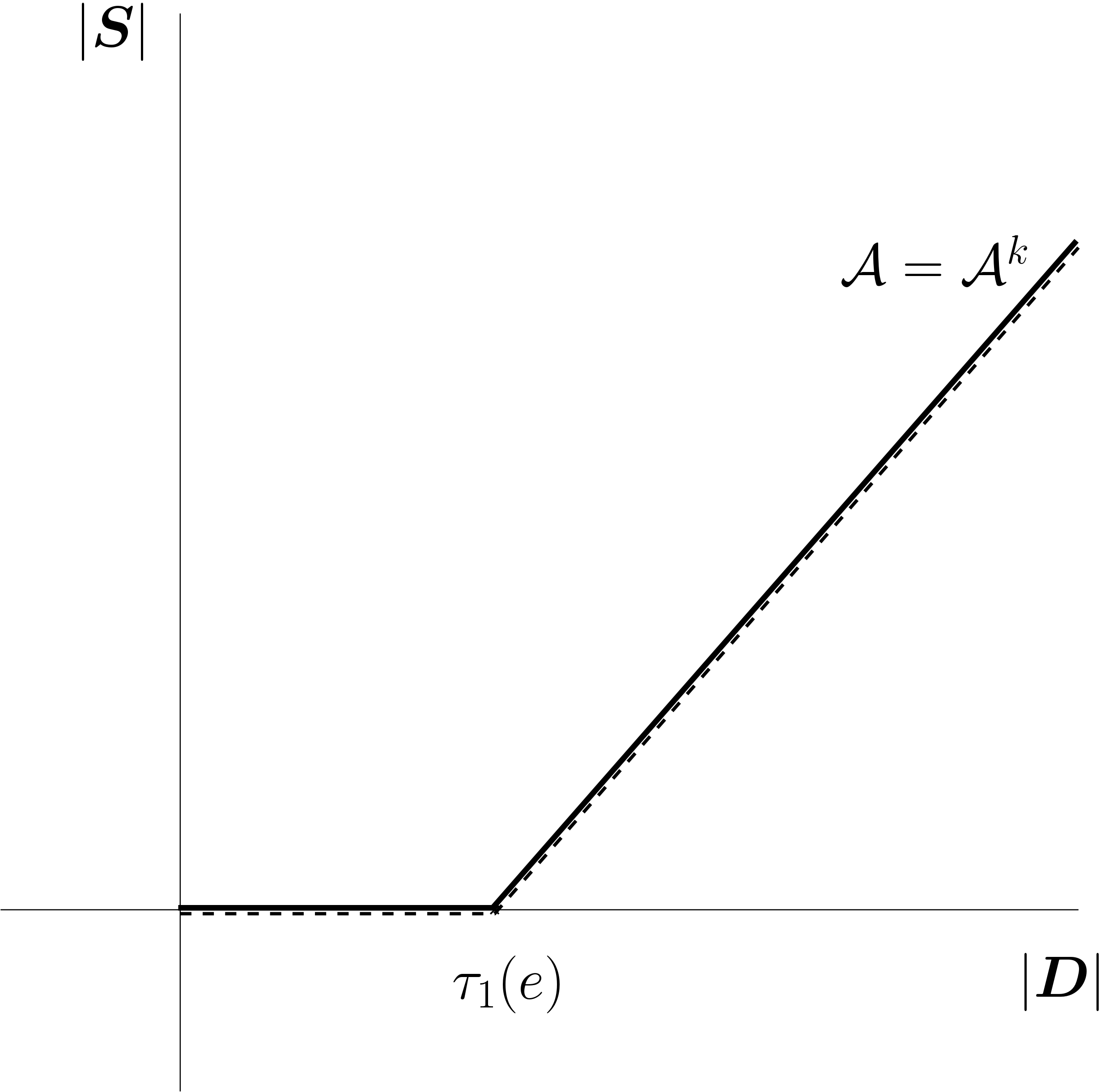}}
  \caption{Intuition behind $\A$ and $\A^k$ for $\ted >0$ and $\tej>0$, respectively. $\A^k$ is dashed.}
\end{figure}

Graph $\B$ will be modified quite analogously in $\big\{0\le |\vt| < \sged/k\big\}$ by means of $\B^k \subset \R^3\times\R^3 \times \R$, where
\begin{align*} 
 (\s,\vt,e)\in\B^k \quad \Longleftrightarrow\quad \s =  \min \Big\{ k+\ge, \frac{\ge(|\vt|-\sgej)^+ +\sged}{|\vt|} \Big\} \vt
\end{align*}
for $(\s,\vt,e)\in \R^3\times\R^3 \times \R$. Let us set $$\s^k(\vt,e)\ddf \s.$$

Both graphs $\A^k$ and $\B^k$ are still monotone and coercive, uniformly in $k$. We state this observation again for $\A^k$ only, the situation with $\B^k$ being completely analogous:
\begin{lem} \label{monok}
Let Assumption \ref{assonkoefs} hold. Then there exist $\alpha, \beta >0$ such that for all $k \in \mathbb{N}$ and arbitrary $(\S, \D, e), (\S', \D', e) \in \A^k$ it holds that 
\begin{gather*}
\S \cdot \D \geq \alpha (|\S|^2+|\D|^2) - \beta, \\[4pt]
(\S - \S') \cdot (\D-\D') \geq 0.
\end{gather*}
\end{lem}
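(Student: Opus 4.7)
The plan is to exploit the radial structure of $\A^k$: for any fixed $e$, definition~\eqref{grafprvni} yields $\S = \phi^k(|\D|,e)\,\D/|\D|$, where
\[
\phi^k(r,e) \ddf \min\bigl\{(k+2\ne)\,r,\; 2\ne(r-\tej)^+ + \ted \bigr\}.
\]
Since both arguments of the $\min$ are non-negative and non-decreasing in $r$ on $[0,\infty)$, so is $\phi^k(\cdot,e)$. The second inequality (monotonicity in $\D$) then reduces to the classical radial-monotonicity computation: setting $r=|\D|$, $r'=|\D'|$ and bounding $\D\cdot\D'\leq rr'$ via Cauchy--Schwarz,
\[
(\S-\S')\cdot(\D-\D') \geq (r-r')\bigl(\phi^k(r,e)-\phi^k(r',e)\bigr)\geq 0,
\]
with no $k$-dependence entering.

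For the coercivity~\eqref{koer}, I would split according to~\eqref{eric}. If $\ted(e)=0$, a direct check gives $(k+2\ne)r > 2\ne(r-\tej)^+$ for every $r\geq 0$, so the $k$-truncation is never active and $\A^k$ coincides with $\A$; the bound therefore transfers verbatim from the non-approximate setting already treated in~\eqref{koer}. If $\ted(e)>0$ (hence $\tej(e)=0$ by~\eqref{eric}), the $k$-truncation is active precisely on the set $\{|\D|\leq \ted(e)/k\}$; on its complement $\A^k$ again coincides with $\A$ and~\eqref{koer} applies as before.

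The main obstacle, and the only case really needing $k$-uniformity to be checked by hand, is the truncated regime $\{|\D|\leq \ted(e)/k\}$: here $|\S|=(k+2\ne)|\D|$ could naively blow up with $k$. The key observation is that the very constraint $|\D|\leq \ted(e)/k\leq c_0/k$ forces $|\S|\leq \ted(e)(1+2c_2/k)\leq (1+2c_2)c_0$, so that both $|\S|$ and $|\D|$ are in fact bounded by constants depending only on $c_0$ and $c_2$, uniformly in $k$. Consequently $\S\cdot\D\geq 0$ suffices for~\eqref{koer} with any fixed $\alpha>0$, provided $\beta$ is chosen large enough to absorb $\alpha(|\S|^2+|\D|^2)$ in this bounded regime. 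Combining all three cases yields constants $\alpha,\beta>0$ independent of $k$, which is precisely what the lemma asserts.
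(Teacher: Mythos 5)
Your proof is correct; the paper itself dismisses this lemma as "obvious from definition~\eqref{grafprvni}", and your case analysis --- radial monotonicity of $\phi^k(\cdot,e)$ combined with Cauchy--Schwarz, coincidence of $\A^k$ with $\A$ outside the truncated regime, and the $k$-uniform bounds $|\S|\le c_0(1+2c_2)$, $|\D|\le c_0/k$ on the truncated set $\{|\D|\le\tau_2(e)/k\}$ --- supplies exactly the details the authors omit. The only cosmetic slip is the claim of strict inequality $(k+2\nu(e))r>2\nu(e)(r-\tau_1(e))^+$ for \emph{every} $r\ge0$ when $\tau_2(e)=0$ (equality holds at $r=0$), which is harmless since both branches yield $\S=\0$ there.
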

\begin{proof}
This is obvious from definition~\eqref{grafprvni}.
\end{proof}

The approximated graphs prevent us from a straightforward employment of Lemma~\ref{converlemma}. Therefore we will need a slight modification of the result:

\begin{lem} \label{superconverlemma}
Let $U \subset Q$ be measurable, $\A$ be defined by~\eqref{satis1} and $\A^k$ be of the form~\eqref{grafprvni} for any $k\in \N$. Consider sequences $(\S^k)_k$, $(\D^k)_k$ and $(e^k)_k$ of measurable functions on $U$, satisfying
 \begin{align}
  &&(\S^k,\D^k,e^k) &\in \A^k &&\text{a.e.\ in $U$ for all $k\in\N$}, \nonumber
  \\
  && \S^k &\rightarrow \S && \text{weakly in $L^2(U)$}, \label{weakSk}\\
  && \D^k &\rightarrow \D && \text{weakly in $L^{2}(U)$},\nonumber
  \\
  && e^k &\rightarrow e && \text{a.e.\ in $U$}, \nonumber
  \\
  &&\limsup_{k \to \infty} \int_U \S^k \cdot \D^k &\le \int_U \S \cdot \D.&& \label{chivaldorek}
 \end{align}
Then $(\S,\D,e) \in \A$ a.e.\ in $U$ and $\S^k \cdot \D^k \to \S \cdot \D$ weakly in $L^1(U)$.
\end{lem}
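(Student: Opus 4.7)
The proof combines a Minty-style monotonicity argument with the explicit structural knowledge of the functional approximations $\A^k$; its organization follows that of Lemma~\ref{converlemma}, the only essential novelty being the need to supply tests that live in $\A^k$ but nevertheless converge in the limit to a prescribed element of $\A$.

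I would fix a bounded measurable pair $(\tilde{\S}, \tilde{\D}) \in L^\infty(U;\R^{3\times 3})^2$ with $(\tilde{\S}(x), \tilde{\D}(x), e(x)) \in \A$ a.e.\ in $U$ and, to begin with, $\tilde{\D}(x) \neq 0$ a.e. Setting $\tilde{\D}^k \ddf \tilde{\D}$ and $\tilde{\S}^k(x) \ddf \S^k(\tilde{\D}(x), e^k(x))$, the single-valued map from~\eqref{grafprvni}, the following holds: on $\{|\tilde{\D}| > 0\}$ and for $k$ large enough, the minimum in~\eqref{grafprvni} is attained by its second argument (the coefficients appearing there are uniformly bounded by Assumption~\ref{assonkoefs}), and this second argument is precisely the unique element of $\A$ associated to $(\tilde{\D}, e^k)$; combined with $e^k \to e$ a.e.\ and Lebesgue's dominated convergence, this delivers $\tilde{\S}^k \to \tilde{\S}$ strongly in $L^2(U)$.

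Monotonicity of $\A^k$ (Lemma~\ref{monok}) then yields $\int_U (\S^k - \tilde{\S}^k) \cdot (\D^k - \tilde{\D})\, dx \geq 0$; expanding and passing to the limit (weak$\,\times\,$strong for the cross terms, strong$\,\times\,$strong for the $\tilde{\S}^k \cdot \tilde{\D}$ term, and the hypothesis~\eqref{chivaldorek} on $\int_U \S^k \cdot \D^k$) produces
\[
\int_U (\S - \tilde{\S}) \cdot (\D - \tilde{\D})\, dx \geq 0
\]
for every such test pair. A measurable-selection plus maximal-monotonicity argument, exactly as in Lemma~\ref{converlemma}, then localises this integral inequality to the pointwise statement $(\S(x), \D(x), e(x)) \in \A$ a.e.\ in $U$, the excluded set $\{\tilde{\D} = 0\}$ being recovered via a limit $\tilde{\D}_n \to 0$ along directions for which the corresponding $\tilde{\S}_n \in \A$ remains well-defined.

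To upgrade this to the claimed weak $L^1$ convergence $\S^k \cdot \D^k \rightharpoonup \S \cdot \D$, I would rerun the same argument over an arbitrary measurable subset $B \subset U$, now using the truncated test pair $\tilde{\D}_n \ddf \D \, \chi_{\{|\D|+|\S| \leq n\}}$ together with the element $\tilde{\S}_n \in \A$ that it picks out (available because $(\S, \D, e) \in \A$ has just been established). Letting first $k \to \infty$ and then $n \to \infty$ gives $\liminf_k \int_B \S^k \cdot \D^k \geq \int_B \S \cdot \D$ for every measurable $B$; applied both to $B$ and to $U\setminus B$ and combined with the global hypothesis~\eqref{chivaldorek} this forces $\int_B \S^k \cdot \D^k \to \int_B \S \cdot \D$ on every measurable $B \subset U$. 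The Vitali--Hahn--Saks theorem then produces equi-integrability of $(\S^k \cdot \D^k)_k$, which together with Dunford--Pettis and uniqueness of the limit supplies the weak $L^1$ convergence. The genuine obstacle I anticipate is the test-pair construction at points $\tilde{\D}(x) = 0$ in the Bingham regime $\tau_2(e(x)) > 0$, where $\A$ admits an entire ball of $\tilde{\S}$-values while $\A^k$ is forced to return $\tilde{\S}^k = 0$; controlling this \emph{flat} part of the graph via a limit along directions is what genuinely distinguishes the present lemma from its predecessor Lemma~\ref{converlemma}.
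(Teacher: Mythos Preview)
Your approach is correct in outline, but it takes a substantially more laborious route than the paper's. The paper observes that $\A^k$ differs from $\A$ only on the set $\{|\S| < \tau_2(e) + 2\nu(e)\tau_2(e)/k\}$, and defines a projection $\Pk:\A^k\to\A$ that keeps the $\S$-component and $e$-component fixed while correcting the $\D$-component by at most $2\tau_2(e)/k\le C/k$ uniformly. Setting $\bar{\D}^k\ddf\Pk_\D(\S^k,\D^k,e^k)$ one then has $(\S^k,\bar{\D}^k,e^k)\in\A$ together with $\bar{\D}^k-\D^k\to 0$ in $L^\infty(U)$, so that all hypotheses of Lemma~\ref{converlemma} transfer verbatim to $(\S^k,\bar{\D}^k,e^k)$ and both conclusions follow immediately from that lemma. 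No test-pair construction, no measurable selection, and no Vitali--Hahn--Saks are needed.

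Your route instead rebuilds the Minty argument at the level of $\A^k$: for a bounded test $(\tilde{\S},\tilde{\D},e)\in\A$ with $\tilde{\D}\neq 0$ you approximate it by $(\S^k(\tilde{\D},e^k),\tilde{\D},e^k)\in\A^k$ and show strong $L^2$-convergence of $\S^k(\tilde{\D},e^k)\to\tilde{\S}$. This is correct (for fixed $\tilde{\D}\neq 0$ and large $k$ the minimum in~\eqref{grafprvni} is indeed the second argument, which coincides with the unique $\A$-value at $(\tilde{\D},e^k)$, and dominated convergence handles the rest). Your concern about the flat Bingham part at $\tilde{\D}=0$ being the ``genuine obstacle'' is somewhat overstated: testing with elements $\tilde{\D}\neq 0$ already suffices to characterise membership in the maximal monotone graph $\A$ (take $\tilde{\D}=\epsilon\,\S_0/|\S_0|$ and let $\epsilon\to 0$ to recover the constraint $|\S_0|\le\tau_2(e)$ at $\D_0=0$), so your directional-limit idea works without further effort. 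The weak-$L^1$ argument via Vitali--Hahn--Saks is also fine but heavier than necessary; the paper's projection reduces this to the corresponding statement in Lemma~\ref{converlemma}, where it is obtained by showing $(\S^k-\S)\cdot(\D^k-\D)\to 0$ strongly in $L^1$. In short: your argument is valid, but the paper's projection trick buys a near-trivial reduction to Lemma~\ref{converlemma} and avoids every one of the complications you anticipate.
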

\begin{proof}
This claim follows from Lemma~\ref{converlemma}, once we show that the approximated graphs $\A^k$ converge, in a sense, uniformly to $\A$. For $k \in \N$, consider the projection $$\Pk=(\Pk_{\S},\Pk_{\D},\Pk_e): \A^k \longrightarrow \A,$$ defined for $(\Aa,\BB,s) \in \A^k$ via 
\begin{align} \label{vlacil}
 \Pk(\Aa,\BB,s) \ddf 
\begin{cases}
 \Big(\Aa, \dfrac{1}{2\nu(s)}\cdot \dfrac{(|\Aa| - \tau_2(s))^+}{|\Aa|}\Aa,s\Big) \quad \text{if $0 \le |\Aa| < \tau_2(s) + \dfrac{2\nu(s)\tau_2(s)}{k}$},\\[10pt]
     \big( \Aa,\BB,s\big) \quad \text{else}.                                                           \end{cases}
\end{align}
 From definitions~\eqref{grafprvni} and~\eqref{vlacil} it follows that $$\big| \Pk_{\D}(\Aa,\BB,s) - \BB \big|  \le \dfrac{2 \tau_2(s)}{k}.$$
 \begin{figure}[!tbp]
  \centering
  \subfloat{\includegraphics[width=0.53\textwidth]{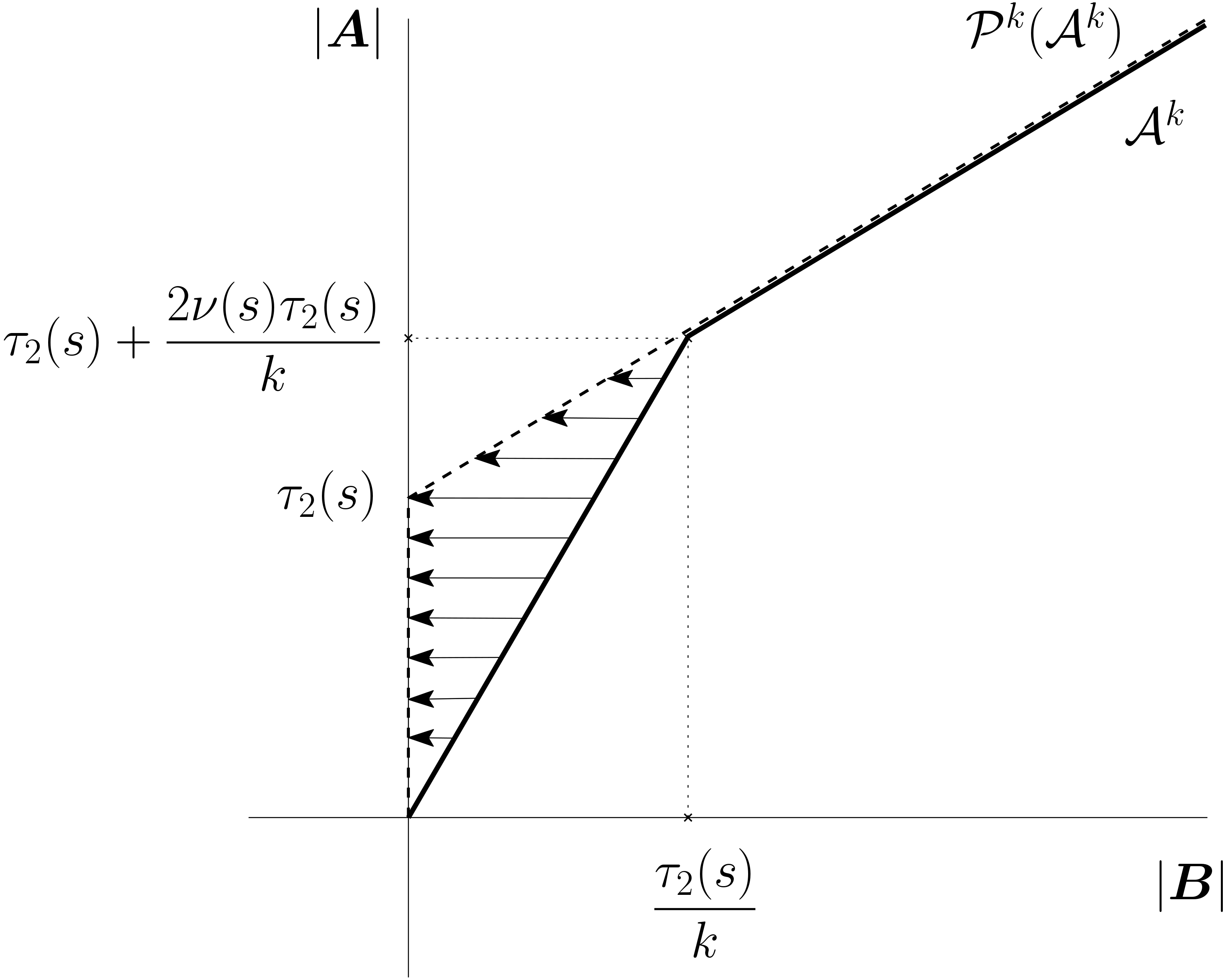}}
  \hfill
  \subfloat{\includegraphics[width=0.42\textwidth]{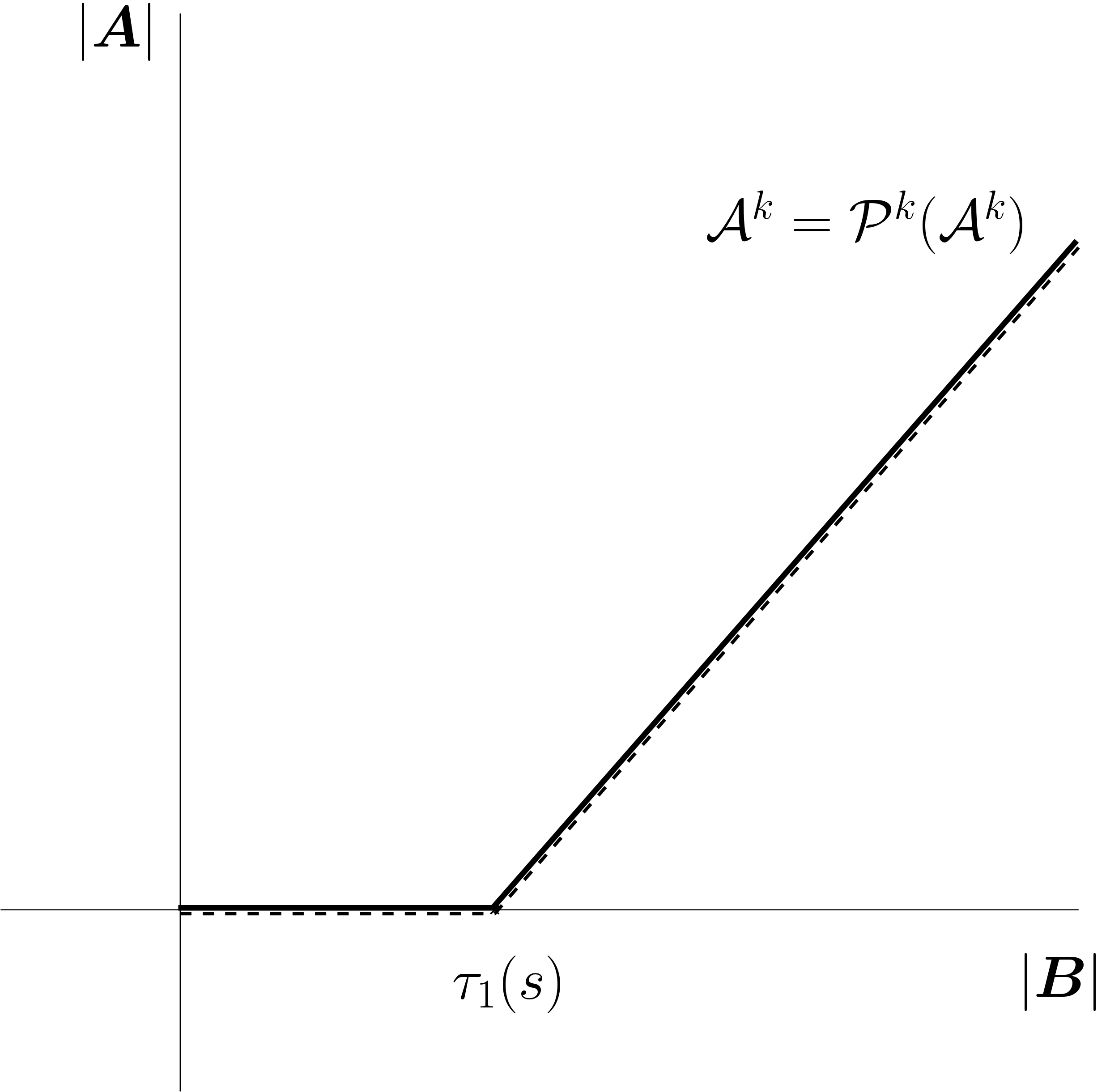}}
  \caption{Insight behind $\Pk$ for $\ted >0$ and $\tej>0$, respectively. $\Pk(\A^k)$ is dashed.}
\end{figure}
As a result, for every $(\Aa,\BB,s)\in\A^k$ it holds due to Assumption~\ref{assonkoefs} that 
\begin{align}\label{egoisthedonist}
\big\| \Pk(\Aa,\BB,s) - (\Aa,\BB,s) \big\|_{\R^{3\times 3}\times\R^{3\times 3} \times \R} \leq \dfrac{C}{k}, 
\end{align}
uniformly in $k>0$. Let us denote $\dk \ddf \Pk_{\D}(\S^k,\D^k,e^k)$. The functions $(\dk)_k$ are measurable on $U$ since $\Pk$ is continuous due to Assumption~\ref{assonkoefs}. The properties~\eqref{weakSk}--\eqref{egoisthedonist} thus enable us to infer  
\begin{align*}
  &&(\S^k,\dk,e^k) &\in \A &&\text{a.e.\ in $U$ for all $k\in\N$}, \\
  && \S^k &\rightarrow \S && \text{weakly in $L^2(U)$},\\
  && \dk &\rightarrow \D && \text{weakly in $L^{2}(U)$},\\
   && e^k &\rightarrow e && \text{a.e.\ in $U$}, \\
  &&\limsup_{k \to \infty} \int_U \S^k \cdot \dk &\le \int_U \S \cdot \D.&& 
 \end{align*}
Now both statements of the lemma follow from the already proven Lemma~\ref{converlemma}. The weak convergence in $L^1(U)$ is easily justified by $\D^k - \dk \rightarrow \0$ strongly in $L^{\infty}(U)$.
\end{proof}
\begin{rem}
Lemma~\ref{superconverlemma} holds completely analogously also for graph $\B$ and its approximations~$\B^k$. 
\end{rem}

\section{Proof of Theorem \ref{prime}} \label{proofthm}
The existence theorem is proved by means of a suitable approximation scheme indexed by parameter $k \in \N$ with which we will pass to infinity.

Firstly, the convective term is truncated so that the velocity field could be used as a test function in the balance of linear momentum, i.e.\ $\w \ddf \ve$ in \eqref{pprvni}. For this purpose, let $\Phi \in \C^1([0,\infty))$ be a non-increasing function such that $\Phi(x) = 1$ if $x \le 1$, $\Phi(x) = 0$ if $x\geq2$ and $\Phi(x)\in (0,1)$ otherwise, with $|\Phi'(x)| \le2$. For $k \in \mathbb{N}$ then define 
$$\Phi_k (x) \ddf \Phi(k^{-1}x). $$ 
Secondly, as foreshadowed already, the implicit relations given by $\A$ and $\B$ will be replaced by $\A^k$ and $\B^k$, respectively. Consequently, we will be enabled to use the standard Carath\'eodory theory to construct some stepping-stone solutions, since instead of the relatively complicated $\S$ and $\s$ in the original problem, we will deal with explicitly given $\Sk(\D\ve,e)$ and $\sk(\vt,e)\ddf -(\Sk(\D\ve,e)\n)_{\tau}$, respectively. Thirdly, due to the approximation's ability to take the velocity as a test function in the balance of linear momentum, there 
is no need for inequality \eqref{boie2} as the balance of total energy \eqref{system3} is equivalent to the balance of internal energy \eqref{boie}.

Recalling also our assumption to neglect the external forces, the original system will ergo be for $k \in \mathbb{N}$ approached by
\begin{align} \label{aprox1}
 \dt \ve + \dir( \vokk) - \dir \Sk(\D\ve,e) + \nn p &= \0 &&\text{in $Q$}, \\ \label{aprox2}
\dir \ve & = 0 &&\text{in $Q$},
 \\ \label{aprox3}
\dt e + \dir (e \ve - \ke \nn e) &= \Sk(\D\ve,e) \cdot \D\ve  &&\text{in $Q$}, \\ \label{aprox4}
\ve \cdot \n &= 0 &&\text{on $\Gamma$}, \\ \label{aprox5}
\vt  &= \sk(\vt,e) &&\text{on $\Gamma$}, \\ \label{aprox6}
\ke \nn e \cdot \n &= 0 &&\text{on $\Gamma$}, \\ \label{aprox7}
\ve( 0) &= \ve_0 &&\text{in $\Omega$},\\ \label{aprox8}
e( 0) &= e_0 &&\text{in $\Omega$}.
\end{align}
 That the above system possesses a weak solution can be expressed as follows:
\begin{lem} \label{sec}
Let assumptions of Theorem~\ref{prime} be met. Then for every $k \in \mathbb{N}$ there exists a weak solution to the approximated problem~\eqref{aprox1}--\eqref{aprox8}, i.e.\ a triplet $(\vk,\ek,\pk)$ such that
\begin{align}
\vk &\in \C([0,T];L^2(\om)) \cap L^2(0,T;\Wnd(\om)), \label{neunes}\\
\dt \vk & \in L^2(0,T;W^{-1,2}_{\n}(\om)),\nonumber \\
\pk &\in L^2(Q), \nonumber\\
\ek &\in \Li(0,T;L^1(\om)) \cap L^{\mfrac{5}{4}}(0,T;W^{1,\mfrac{5}{4}}(\om)), \label{ukazka}\\
\ek &\geq c_3 \text{ a.e.\ in $Q_T$}, \label{maxpr}
\\
\dt \ek &\in L^1(0,T;W^{-1,\mfrac{10}{9}}(\om)), \label{ukazka2}
\end{align}
satisfying for a.e.\ $t\in(0,T)$, all $\w \in \Wn(\om)$ and $u\in W^{1,\infty}(\om)$ the weak formulation
\begin{gather} \label{prvni}
\langle \dt \vk,\w \rangle + \ii \Bigl( \Sk - \pk \I - \vok \Bigr)\cdot \nn \w \, dx + \iii \sk \cdot \w \, dS = 0,  \\ \label{druha}
\langle \dt \ek,u \rangle + \ii \bigl( -\ek\vk + \kk\nn\ek \bigr)\cdot \nn u \, dx = \ii (\Sk\cdot\D\vk)u\, dx,
\end{gather}
abbreviating $\Sk\ddf \Sk(\D\vk,e^k)$ and $\sk\ddf\sk(\vt^k,e^k)$. In addition, the internal energy satisfies for a.e.\ $t\in(0,T)$ and any non-negative $u\in W^{1,\infty}(\om)$
\begin{align} \label{treti}
\ii \sqrt{\ek(t)} \, u \, dx - \ii \sqrt{e_0} \,u \, dx   + \iqt \Bigl( -\sqrt{\ek}\, \vk + \dfrac{\kk \nn\ek}{2 \sqrt{\ek}} \Bigr)\cdot \nn u \, dx \, dt \geq 0. 
\end{align}
The initial conditions are attained through
\begin{align*}
\lim_{t\to 0_+ }  \| \vk(t)-\ve_0\|_2+ \esslim_{t\to 0_+ }\|\ek(t)-e_0\|_1=0.
\end{align*}
Furthermore, we have the estimate
\begin{align} \label{ministr}
\esssup_{t\in(0,T)}\Bigl( \|\vk(t)\|_2^2 + \|\ek(t)\|_1 \Bigr) + \itt \Bigl( \|\vk\|_{1,2}^2+\|\ek \|_{1,\mfrac{5}{4}}^{\mfrac{5}{4}}+\|\pk\|_{\frac{5}{3}}^{\frac{5}{3}} + \|\Sk\|_2^2 \Bigr) \, dt + \ig|\sk|^2 \, dS\, dt \le C.
\end{align}
where $C$ is independent of $k$.
\end{lem}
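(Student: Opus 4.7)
The plan is, for each fixed $k$, to construct $(\vk,\ek,\pk)$ by a Galerkin approximation of the momentum balance coupled to a weak solution of the heat equation via a Schauder fixed-point argument, and then to pass to the Galerkin limit. Since the graphs $\A^k$, $\B^k$ have been replaced by continuous, linearly growing functions $\S^k(\D\ve,e)$ and $\s^k(\vt,e)$, classical techniques apply and the only nontrivial bond to undo is the temperature coupling.

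I would fix a basis $\{\w_i\}_{i=1}^{\infty}\subset W^{1,2}_{\n,\dir}(\om)$ (e.g.\ eigenfunctions of the associated Stokes operator with slip-type boundary) and, given $\tilde e\in L^2(Q)$ with $\tilde e\ge c_3$, search for $\ve^n=\sum_{i\le n}c_i(t)\w_i$ satisfying the $n$-dimensional Galerkin momentum system with $e$ replaced by $\tilde e$. This is a Carath\'eodory ODE for $(c_i)$, solvable locally; testing with $\ve^n$ itself eliminates the convective contribution by virtue of the truncation $\Phi_k$, while Lemma~\ref{monok} together with Korn's inequality yields a uniform $L^2(0,T;W^{1,2})$ bound that extends the solution to all of $[0,T]$. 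Given $\ve^n$, the heat equation has non-negative right-hand side $\S^k(\D\ve^n,\tilde e)\cdot\D\ve^n\in L^1(Q)$, and a Boccardo--Gallou\"et-type construction produces $e^n\in L^{5/4}(0,T;W^{1,5/4}(\om))\cap L^\infty(0,T;L^1(\om))$ with $e^n\ge c_3$---the lower bound coming from a standard minimum-principle argument exploiting non-negativity of the RHS, divergence-freeness of $\ve^n$ and the no-flux boundary condition. The map $\tilde e\mapsto e^n$ is continuous and compact on a closed ball of $L^2(Q)$ (via Aubin--Lions), so Schauder delivers a Galerkin solution $(\ve^n,e^n)$.

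The uniform-in-$n$ estimate \eqref{ministr} is then obtained by testing the momentum equation with $\ve^n$ and the heat equation with $u\equiv 1$ and summing: this yields conservation of total energy $\tfrac12\|\ve^n(t)\|_2^2+\|e^n(t)\|_1=\tfrac12\|\ve_0\|_2^2+\|e_0\|_1$, from which Lemma~\ref{monok} and Korn upgrade to the $L^2(W^{1,2})$ bound on $\ve^n$, the $L^2(Q)$ bound on $\S^k$, and the $L^2(\Gamma)$ bound on $\s^k$. The $L^{5/4}(W^{1,5/4})$ control of $e^n$ follows from the classical truncation test $1-(1+e^n)^{-\alpha}$ for small $\alpha>0$ combined with parabolic Gagliardo--Nirenberg interpolation in three dimensions. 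The pressure $\pk$ is reconstructed by de~Rham from the momentum equation, decomposed into a harmonic part determined by the boundary stress $\sk$ and a zero-mean part handled by a Bogovskii-type operator; the $L^{5/3}(Q)$ bound uses $\vk\otimes\vk\in L^{5/3}(Q)$, which follows from $L^\infty(L^2)\cap L^2(W^{1,2})\hookrightarrow L^{10/3}(Q)$.

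The Galerkin passage $n\to\infty$ is then standard: Aubin--Lions yields $\ve^n\to\vk$ strongly in $L^2(Q)$ and $e^n\to\ek$ strongly in $L^1(Q)$, while continuity of $\S^k(\cdot,\cdot)$, $\s^k(\cdot,\cdot)$ together with a Minty-type monotonicity argument identifies the nonlinear terms in the limit. Inequality \eqref{treti} is obtained by testing the heat equation with $u/(2\sqrt{e^n+\delta})$, using the chain rule $\nabla\sqrt{e^n+\delta}=\nabla e^n/(2\sqrt{e^n+\delta})$, dropping the non-negative dissipation $\kappa(e^n)|\nabla e^n|^2u/(4(e^n+\delta)^{3/2})$, and letting $\delta\searrow 0$ after $n\to\infty$. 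The main technical obstacle I expect is the $L^{5/3}$ pressure bound under the temperature-dependent threshold-slip condition: the standard no-slip Bogovskii decomposition does not apply off-the-shelf, and the boundary integral $\iii\sk\cdot\w\,dS$ must be carefully accounted for when reconstructing $\pk$. Fortunately, at fixed $k$ the coefficients remain bounded and the boundary stress stays in $L^2(\Gamma)$, so the isothermal construction of Bul\'{i}\v cek--M\'alek should adapt with only cosmetic changes.
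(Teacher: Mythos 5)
Your construction is a genuinely different route from the paper's. The paper does not run a divergence-free Galerkin scheme plus a~posteriori pressure recovery; instead it inserts a \emph{quasicompressible} approximation, replacing $\dir\ve=0$ by the Neumann problem $\dir\ve=\e\Delta p$, $\nn p\cdot\n=0$, $\int_\om p=0$, solved together with a two-level Galerkin discretization (following \cite{bulivcek2009navier}), and then sends $\e\to0_+$ using $\e$-uniform estimates available thanks to the Helmholtz decomposition of $\Wn(\om)$. The payoff is that the pressure exists as a genuine $L^2$ function at every approximation level, and the $L^{5/3}(Q)$ bound is obtained by testing the momentum balance with $\w\ddf\nn h$, where $-\Delta h=|\pk|^{-1/3}\pk-\text{mean}$ with homogeneous Neumann data, exploiting orthogonality of $\nn h$ and $\dt\vk$. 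The two-level Galerkin (keeping the temperature equation undiscretized while the velocity is discretized) is what secures the minimum principle $\ek\ge c_3$; your Schauder decoupling achieves the same end, at the price that the heat equation with right-hand side merely in $L^1$ need not have a unique solution, so the map $\tilde e\mapsto e^n$ is not obviously single-valued and one must regularize the right-hand side (or use a set-valued fixed point) to make the argument rigorous. Also note that total energy is not conserved at the approximate level: testing with $\vk$ and $u\equiv1$ and summing leaves the boundary dissipation $\ig\sk\cdot\vk_\tau\ge0$, so one gets an inequality, which still suffices for \eqref{ministr}.

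The one step of your proposal that would not go through as written is the pressure reconstruction. A Bogovskii-type operator is the right inverse of the divergence with \emph{zero full trace}, i.e.\ the no-slip tool; here the boundary condition is only impermeability plus threshold slip, and the paper even stresses (citing \cite{Frehse2003}) that an integrable pressure is in general not available in the no-slip setting, so importing no-slip machinery is the wrong direction. The correct device in $\Wn(\om)$ is the Neumann Laplacian: either build the pressure into the scheme from the start via the quasicompressible regularization (the paper's choice), or define $\pk$ a~posteriori as the solution of the weak Neumann problem $\int_\om\pk\Delta\varphi=\int_\om(\Sk-\vok)\cdot\nn^2\varphi+\int_{\partial\om}\sk\cdot\nn\varphi$ for all $\varphi\in W^{2,2}(\om)$ with $\nn\varphi\cdot\n=0$, which is exactly the decomposition the paper uses later for the limit $k\to\infty$. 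Either repair is routine, but as stated the ``harmonic part plus Bogovskii'' recipe does not produce the $L^{5/3}(Q)$ bound claimed in \eqref{ministr}, and this bound is essential for the subsequent $L^\infty$-truncation. The remainder of your argument (energy estimate, $L^1$-right-hand-side estimates for $\ek$ via the test functions $1$ and $(\ek)^\lambda$, and the derivation of \eqref{treti} by testing with $u/(2\sqrt{\ek+\delta})$) matches the paper's informal derivation.
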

\begin{rem}\label{mministry} Notice that Lemma~\ref{sec} implies straightaway
\begin{align} \label{ministry}
\itt \Bigl( \|\vk\|_{\frac{10}{3}}^{\frac{10}{3}}+\|\ek \|_{\mfrac{5}{3}}^{\mfrac{5}{3}} + \| \dt \vk \|_{W_{\n}^{-1,\frac53}}^{\frac53} + \| \dt \ek \|_{W^{-1,\mfrac{10}{9}}}^{\phantom{\frac53}} \Bigr) \, dt + \ig|\vk|^{\frac{8}{3}} \, dS\, dt \le C,
\end{align}
with $C$ independent of $k$.
\end{rem}
\begin{proof}[Proof of Remark \ref{mministry}] The first two terms follow from the embeddings 
\begin{gather*}
\Li(0,T;L^2(\om))\cap L^2(0,T;W^{1,2}(\om)) \hookrightarrow L^{\frac{10}{3}}(0,T;L^{\frac{10}{3}}(\om)), \\
\Li(0,T;L^1(\om))\cap L^{\mfrac{5}{4}}(0,T;W^{1,\mfrac{5}{4}}(\om)) \hookrightarrow L^{\mfrac{5}{3}}(0,T;L^{\mfrac{5}{3}}(\om)),
\end{gather*}
respectively. For the boundary term we add also the trace theorem via $$\iii |\vk|^{\frac{8}{3}}\,dS \le C \|\vk\|^{\frac{8}{3}}_{\frac{3}{4},2}\le C\|\vk\|^{\frac{2}{3}}_2\|\vk\|^2_{1,2}. $$
The bounds on the time derivatives follow from the weak formulations \eqref{prvni} and \eqref{druha}, respectively, estimate~\eqref{ministr} and the already proven part of~\eqref{ministry}.
\end{proof}

\begin{proof}[Notes on the proof of Lemma \ref{sec}]
Towards proving Lemma \ref{sec}, the problem~\eqref{aprox1}--\eqref{aprox8} would be approximated even further by a \emph{quasicompressible} system with a parameter $\e>0$; our plan being to take the limit $\e\to0_+$. Specifically, the solenoidal condition \eqref{aprox2} is replaced by the Neumann problem for the pressure, so that the approximation could be symbolically described as
\begin{align} \label{kejepr}
\phantom{\ii} \dir \ve & = 0 \quad \text{in $Q$}\quad \xrsquigarrow{\qquad \quad} \quad
\left\{ \
\begin{aligned}
 \phantom{\ii} \dir \ve & = \e \Delta p \phantom{0} \quad \text{in $Q$},  \\
\nn p \cdot \n &= 0 \phantom{\e \Delta p} \quad \text{on $\Gamma$}, \\
\ii p(x)\, dx &= 0 \phantom{\e \Delta p} \quad \text{in $(0,T)$}.
\end{aligned}
\right.
\end{align}
The quasicompressible approximation helps us construct an integrable pressure and once we have a solution thereof, the limit pass $\e \to 0_+$ to~\eqref{aprox1}--\eqref{aprox8} is achieved by means of $\e$-uniform estimates. These are available thanks to the Helmholtz decomposition of the space $\Wn(\om)$ (which does not apply to $W^{1,2}_0(\om)$, for instance). A solution to the quasicompressible approximation can be found by dint of a two-level Galerkin space discretization. This is necessary e.g.\ for reasons of warranting the minimum principle for the internal energy~\eqref{maxpr}.

Due to its having been done in~\cite{bulivcek2009navier} for a similar situation, we will skip the proof. Compared to the proof of our main Theorem~\ref{prime}, the differences between the proof of Lemma~\ref{sec} and its counterpart in~\cite{bulivcek2009navier} are practically aesthetic only: Firstly, $e$-dependent coefficients, whose convergence is easily deduced by compactness of the internal energy both in $Q$ and on $\Gamma$, and Assumption~\ref{assonkoefs} so that they add no additional difficulty whatsoever. Secondly, presence of $\A^k$ and $\B^k$ and necessity for convergence of pertinent non-linear terms are equally simple by means of Minty's trick (recall both $\A^k$ and $\B^k$ are continuous and Lemma~\ref{monok} guarantees monotonicity). 

However, not wanting to rest upon the blackbox of~\cite{bulivcek2009navier} completely, let us, at least informally, derive inequality~\eqref{treti} and the uniform estimate~\eqref{ministr} here: The inequality~\eqref{treti} plays only an auxiliary role for the attainment of the initial value of the internal energy. If we could test the balance of internal energy~\eqref{druha} with the function
\begin{align*}
\overline{u}(t,x) \ddf \dfrac{u}{2\sqrt{\ek}}\chi_{(0,t)},
\end{align*}
where $\chi_{(0,t)}$ is the characteristic function of the interval $(0,t)$ and $u\in W^{1,\infty}(\om)$ is arbitrary yet non-negative, we would end up with
\begin{multline} 
\ii \! \sqrt{\ek(t)} \, u \, dx - \ii \! \sqrt{e_0} \,u \, dx   + \iqt \! \Bigl( -\sqrt{\ek}\, \vk + \dfrac{\kk \nn\ek}{2 \sqrt{\ek}} \Bigr)\cdot \nn u \, dx \, dt \\ = \iqt\! \Big( \dfrac{\Sk\cdot\D\vk}{2\sqrt{\ek}} + \dfrac{\kk |\nn \ek|^2}{4\ek\sqrt{\ek}}\Big) u\, dx \, dt \geq 0. \label{brandt}
\end{multline} 
Indeed, this is possible only formally, since with such a choice we no longer have $\overline{u}(t) \in W^{1,\infty}(\om)$, as required in~\eqref{druha}. This problem, however, can be overcome in multiple ways, e.g.\ by testing the equation at the level of sufficiently regular approximations and then showing that the inequality~\eqref{brandt}, being already well-defined under the properties~\eqref{neunes}--\eqref{maxpr}, is stable under passing to limit. We will corroborate this stability for the final limit $k\to \infty$ when justifying attainment of the initial internal energy on p.~\pageref{intene}, which will also edify about stability leading to~\eqref{treti} in the first place.  

Concerning the estimate~\eqref{ministr}, for a fixed $k \in \mathbb{N}$, the velocity field $\vk$ remains an admissible test function in the balance of linear momentum~\eqref{prvni}. Hence the energy equality in the balance of linear momentum holds, and when combined with coercivity from Lemma~\ref{monok} and Korn's inequality, we deduce
\begin{align} \label{prvniodhad}
\esssup_{t\in(0,T)} \|\vk(t)\|_2^2  + \itt \Big( \|\vk\|_{1,2}^2 + \| \Sk \|_2^2 \Big) \, dt + \ig|\sk|^2 \, dS\, dt \le C.
\end{align}
For the estimate on the pressure, we consider an auxiliary problem \begin{align*} 
-\Delta h &= |\pk|^{-\frac13}\pk - \frac{1}{|\om|}\int_{\om} |\pk|^{-\frac13}\pk\, dx \quad \text{in $Q$,} \\
\nn h \cdot \n &=0 \quad \text{on $\Gamma$,} \\
\int_{\om} h \, dx & =0 \quad \text{in $(0,T)$.}
\end{align*}
Since the elliptic theory for Neumann's problem (here we need the $\C^{1,1}$ boundary of $\om$; see e.g.~\cite{MR775683}) yields the bound 
$$\|\nn h \|_{1,\frac52} \leq C \| \pk \|_{\frac53} $$
with $C$ influenced by $\om$ only, and~\eqref{kejepr} guarantees $$- \iq \pk \I \cdot \nn^2 h \, dx \, dt = \int_0^T \|\pk \|_{\frac53}^{\frac53} \, dt,$$
we take $\w \ddf \nn h \in \Wn(\om)$ in~\eqref{prvni} and hence (after some computation, exploiting particularly mutual orthogonality of $\w$ and $\dt\vk$) derive 
\begin{align*} 
\itt \|\pk \|_{\frac53}^{\frac53} \, dt \le C.
\end{align*}
Estimates for equations with the right-hand side in $L^1$ yield the rest: in~\eqref{druha}, let us first take $u\ddf 1$ and then $u\ddf (\ek)^{\lambda}$ for $-1<\lambda<0$. The second choice of the test function yields
 $$\iq\frac{|\nn \ek|^2}{(\ek)^{1-\lambda}} \leq \frac{C}{\lambda+1} $$
and thence altogether we can deduce
\begin{align*}
\esssup_{t\in(0,T)} \| e^k(t) \|_1 + \itt \|\ek \|_{1,\mfrac{5}{4}}^{\mfrac{5}{4}} \, dt \le C.
\end{align*}
See~\cite{bulivcek2009navier} for details. 
\end{proof}
Having already proved the additional estimate~\eqref{ministry}, the real work has to be done only in the final limit, i.e.\ $k\to\infty$, which is exactly what the proof of Theorem~\ref{prime} is all about.

\begin{proof}[Proof of Theorem \ref{prime}.]
 Uniform estimates~\eqref{ministr} and~\eqref{ministry} combined with the Banach-Alaoglu, the Eberlein-\v Smulian and the Aubin-Lions theorems allow us to assume that as $k\to\infty$, the following convergences take place:
\begin{align}
\label{ka} &&\vk &\rightarrow \ve  &&\text{weakly in $L^2(0,T;\Wnd(\om))$}, \\
\label{kb} &&\vk &\rightarrow \ve  &&\text{weakly$^*$ in $\Li(0,T;L^2(\om))$},\\
\label{kc} &&\vk &\rightarrow \ve  &&\text{strongly in $L^{\mfrac{10}{3}}(Q)$}, \\
\label{ke} &&\dt \vk &\rightarrow \dt \ve  &&\text{weakly in $L^{\frac{5}{3}}(0,T;W_{\n}^{-1,\frac53}(\om))$},\\
\nonumber
&&\pk &\rightarrow p  &&\text{weakly in $L^{\frac{5}{3}}(Q)$},\\
\label{kg} &&\ek &\rightarrow e  &&\text{weakly in $L^{\mfrac{5}{4}}(0,T;W^{1,\mfrac{5}{4}}(\om))$}, \\
\label{kh} &&\ek &\rightarrow e  &&\text{strongly in $L^{\mfrac{5}{3}}(Q)$}, \\
\label{kl} &&\ek &\rightarrow e  &&\text{a.e.\ in $Q$}, \\
\label{ki} &&\dt \ek &\rightarrow \dt e  &&\text{weakly in $\M(0,T;W^{-1,\mfrac{10}{9}})$}, \\
\label{kk} &&\Sk &\rightarrow \S &&\text{weakly in $L^2(Q)$},\\
\label{kj} &&\sk &\rightarrow \s &&\text{weakly in $L^2(\Gamma)$}.\\
\intertext{We are also able to derive compactness of the traces of $\vk$ and $\ek$, namely that they satisfy}
\label{kd} &&\vk &\rightarrow \ve  &&\text{strongly in $L^2(\Gamma)$}, \\
\label{kn}
&&\ek &\rightarrow e  &&\text{strongly in $L^1(\Gamma)$}.
\end{align}
Indeed, the strong convergences~\eqref{kc} and~\eqref{kh}, interpolation and the trace theorem imply
\begin{gather*}
L^2(0,T;\Wn(\om)) \cap W^{1,\frac{5}{3}}(0,T;W_{\n}^{-1,\frac53}(\om)) \hookrightarrow\hookrightarrow L^2(0,T;W^{1),2}_{\n}(\om)) \hookrightarrow L^2(\Gamma), \\
L^{\mfrac{5}{4}}(0,T;W^{1,\mfrac{5}{4}}(\om))
\cap W^{1,1}(0,T;W^{-1,\mfrac{10}{9}}(\om)) \hookrightarrow\hookrightarrow L^{1}(0,T;W^{\alpha,\frac54}(\om)) \hookrightarrow L^1(\Gamma)
\end{gather*}
for any $\alpha\in(4/5,1)$ as the first embedding on the second line holds for any $0<\alpha<1$, while the second one only for $\alpha > 4/5$. All in all,~\eqref{kd} and~\eqref{kn} are thus justified. Notice also that~\eqref{maxpr}, \eqref{ministr}, \eqref{kl} and Fatou's lemma imply
$$e \in L^{\infty}(0,T;L^1(\om)). $$
\paragraph{Weak formulation.}
The above convergences make passing from~\eqref{prvni} to~\eqref{pprvni} trivial. The situation in~\eqref{druha} is quite the opposite and we are unable to pass to the limit in its current form. Therefore we replace the equation for the internal energy $\ek$ with that for the total energy $$\Ek \ddf \frac{|\vk|^2}{2}+\ek.$$
To this end, let $u\in W^{1,\infty}(\om)$ and in~\eqref{prvni} take $\w \ddf \vk u \in \Wn(\om)$. Adding the resulting identity to~\eqref{druha}, for a.e.\ $t\in(0,T)$ we obtain
\begin{align} \label{altap}
\langle \dt \Ek,u \rangle + \ii \Bigl( \Sk\vk - (\Ek+\pk)\vk + \kk\nn\ek \Bigr) \cdot \nn u \, dx + \iii \sk \cdot \vk u \, dS +\f^k(u) &= 0,
\end{align}
where, using $\dir \vk = 0$,
\begin{align*}
 \f^k(u) & = \ii \left( \frac{|\vk|^2}{2}\,\vk \cdot \nn u - \vok \cdot (u \nn \vk + \vk \otimes \nn u) \right) dx \\
& = \ii \left( \frac{|\vk|^2}{2}\,\vk \cdot \nn u - u \vk \cdot \nn \Big( \int_{0}^{|\vk|} s \Phi_k(s)\, ds \Big)  - \Phi_k(|\vk|) |\vk|^2 \vk \cdot \nn u \right) dx \\
& = \ii \left( \frac{|\vk|^2}{2}\,\vk \cdot \nn u +\Big( \int_{0}^{|\vk|} s \Phi_k(s)\, ds \Big) \vk \cdot \nn u  - \Phi_k(|\vk|) |\vk|^2 \vk \cdot \nn u \right) dx \\
& = \ii \left( \int_{0}^{|\vk|} s (\Phi_k(s)-1) \, ds + (1- \Phi_k(|\vk|)) |\vk|^2 \right) \vk \cdot \nn u \, dx
\end{align*}
and we notice
\begin{align} \label{hazel}
 \itt |\f^k(u)| \, dt \to 0
 \end{align}
for $k\rightarrow\infty$ and actually every $u \in L^{(10}(0,T;W^{1,(10}(\om))$ due to~\eqref{kc} and the fact that $\Phi_k \nearrow 1$. Taking into consideration~\eqref{ka}--\eqref{kj}, it is hence easy to pass from~\eqref{altap} to~\eqref{ddruha}. Note that the estimates~\eqref{ministr} and~\eqref{ministry} yield 
\begin{align} \label{wysiwyg}
\sup_k \Big( \big\|\Sk\vk - (\Ek+\pk)\vk + \kk\nn\ek \big\|_{L^{\mfrac{10}{9}}(Q)} + \| \sk \cdot \vk \|_{L^{\frac{10}{9}}(\Gamma)} \Big) < \infty,
\end{align}
so that unlike~\eqref{ki}, for the total energy we can suppose by~\eqref{altap}, \eqref{hazel} and~\eqref{wysiwyg}, 
\begin{align*}
\dt  \Ek \to \dt E \quad \text{weakly in $L^{\mfrac{10}{9}}(0,T;W^{-1,\mfrac{10}{9}}(\om))$},
\end{align*}
yielding~\eqref{primea}.

\paragraph{Threshold conditions.}
Next we will show validity of~\eqref{satis1} and~\eqref{satis2}, i.e.\ practically that the weak limit of a non-linear function is this non-linear function of the weak limit of its argument. For this sake we employ Lemma~\ref{superconverlemma}. Regarding~\eqref{satis2}, all prerequisites of the convergence lemma are met, including the $\limsup$ condition~\eqref{chivaldorek} due to the compactness of traces. Hence $(s,\vt,e)\in\B$ a.e.\ on $\Gamma$ on account of~\eqref{kj}--\eqref{kn}.

Towards the other inclusion, i.e.\ $(\S,\D \ve,e)\in\A$ a.e.\ in $Q$, the crucial $\limsup$ condition is still to be proved:
\begin{align} \label{budesehodit}
\intej \S\cdot\D\ve \geq 
\limsup_{k\to \infty} \intej \Sk\cdot \D \vk,
\end{align}
for some measurable $E^j\subset Q$, $j\in\N$, satisfying $\displaystyle\lim_{j\to\infty} |Q\setminus E^j|=0$. The procedure to achieve this end is based on the $\Li$-truncation method~\cite{MR1183665,MR1602949}, as we cannot take $\w\ddf\ve$ in the weak formulation~\eqref{pprvni}. Its deployment necessitates the pressure decomposition first.

\paragraph{Pressure decomposition.}
Although we have a uniform $\nicefrac{5}{3}$-integrability result for the pressure in~\eqref{ministr}, this is not yet satisfactory. For the ensuing $\Li$-truncation method we need the pressure to be at least $L^2(Q)$ or, on the other hand, Sobolev in $\om$, albeit with an exponent of integrability less than $\nicefrac{5}{3}$. In this paragraph we will show that we can split $$\pk = \pk_1 + \pk_2$$ with $\pk_1 \in L^2(Q)$ and $\pk_2 \in L^{\frac54}(0,T;W^{1,\frac54}(\om))$; with the respective bounds in both cases uniform in $k$ (for this step we need a $\C^{1,1}$ boundary of $\om$), implying that we can assume
\begin{align} \label{redspa}
\pk_1 &\rightarrow p_1 \quad \text{weakly in $L^2(Q)$,}\\ \label{redspb}
\pk_2 &\rightarrow p_2  \quad \text{weakly in $L^{\frac54}(0,T;W^{1,\frac54}(\om))$,}
\end{align}
with $p_1+p_2=p$. We will only sketch the procedure as the detailed version is to be found in~\cite{ja4} and the very origin lies in~\cite{BMR09}.

According to~\eqref{prvni}, for any $\varphi \in W^{2,2}(\om)$ such that $\nn \varphi \cdot \n = 0$ at $\partial \om$ and a.e.\ $t \in (0,T)$, we have
\begin{align*} 
 \int_{\om}\pk \Delta \varphi\, dx = \int_{\om } \big (\Sk - \vok \big) \cdot \nn^2 \varphi \, dx + \int_{\partial \om } \sk \cdot \nn \varphi \, dS = 0.
\end{align*}
Let the partial pressure $\pk_1$ be for a.e.\ $t\in(0,T)$ given as the unique weak solution to
\begin{align*} 
 \int_{\om}\pk_1 \Delta \varphi\, dx &= \int_{\om } \Sk \cdot \nn^2 \varphi \, dx+ \int_{\partial \om } \sk \cdot \nn \varphi \, dS, \\
 \nn \pk_1 \cdot \n &= 0 \quad \text{on $\partial \om$}, \\ \int_{\om}\pk_1 &= 0, 
 \intertext{and $\pk_2 \ddf \pk - \pk_1$ so that~$\pk_2$ satisfies for a.e.\ $t\in(0,T)$}
 - \int_{\om}\nn \pk_2 \cdot \nn \varphi\, dx &= \int_{\om } \dir \big( \vok \big) \cdot \nn \varphi \, dx,
 \end{align*}
in both cases for all $\varphi \in W^{2,2}(\om)$ such that $\nn \varphi \cdot \n = 0$ at $\partial \om$. The elliptic theory then makes it possible to derive $k$-uniform bounds leading to~\eqref{redspa} and~\eqref{redspb} (note that the sequence $\big(\dir ( \vok )\big)_k$ is bounded in $L^{\frac54}(Q)$). 

Now we can bound $(\dt \vk)_k$ in $L^2(0,T;W_{\n}^{-1,2}(\om)) + L^{\frac54}(Q)$: Let $\w \in L^2(0,T;\Wn(\om)) \cap L^5(Q)$. Weak formulation~\eqref{prvni} and the above pressure decomposition yield $\dt \vk = \dt \vk_1 + \dt \vk_2$, where
\begin{align*}
\langle \dt \vk_1,\w \rangle &\ddf \ii \Bigl( \Sk - \pk_1 \I \Bigr)\cdot \nn \w \, dx + \iii \sk \cdot \w \, dS, \\
\langle \dt \vk_2,\w \rangle &\ddf \ii \Bigl( \dir \vok + \nn \pk_2 \Bigr)\cdot \w \, dx,
\end{align*}
so that, due also in part to estimate~\eqref{ministr},
\begin{align*}
\int_0^T \big| \langle \dt \vk_1,\w \rangle \big| &\le  C  \| \w \|_{L^{2}(\Wn)}, \\ 
\int_0^T \big| \langle \dt \vk_2,\w \rangle \big| &\le  C  \| \w \|_{L^5(Q)},
\end{align*}
uniformly in $k$. As such, one may suppose 
\begin{align} \label{redspc}
\dt \vk \rightarrow \dt \ve \quad \text{weakly in $L^2(0,T;W_{\n}^{-1,2}(\om)) + L^{\frac54}(Q)$.}
\end{align}

\paragraph{Preliminary definitions.}
Let $N\in\N$ be arbitrary, yet fixed, and consider the functions
$$I^k \ddf |\pk_1|^2+|\nn\vk|^2+|\nn\ve|^2+ |\Sk|^2+|\bs|^2+1,$$
where $\bs \in L^2(Q)$ be such that
\begin{align}2\ne \frac{\left(|\D \ve|-\tau_1(e) \right)^+}{|\D \ve|}\D \ve = \frac{\left(|\bs|-\tau_2(e) \right)^+}{|\bs|}\bs. \label{synarchy}
\end{align}
 Note that thus defined, $\bs$ is not uniquely determined in $\{\D \ve =0 \} \cap \{ \ted > 0 \}$. Therefore we simply set $\bs \ddf 0$ in $\{\D \ve =0 \}$. Next, we define sets $$Q_i^k \ddf \{ N^i < |\vk-\ve| \le N^{i+1} \}, \quad i= 1,\ldots,N.$$
Since $Q_i^k$ are mutually disjoint, for all $k$ we have $$\sum_{i=1}^N \int_{Q_i^k} I^k \le \sup_k \iq I^k \le C,$$ so that for every $k$ there is $i_k \in \{1,\ldots,N \}$ such that 
\begin{align}\label{tothe}
\int_{Q_{i_k}^k} I^k  \le \frac{C}{N}.
\end{align}
Finally, set $$\lambda^k \ddf N^{i_k}.$$

\paragraph{$\Li$-truncation itself.}
Due to the results~\eqref{redspa}--\eqref{redspc}, we can test in the weak formulation~\eqref{prvni} with the truncated velocity difference
\begin{align*}\wk \ddf \begin{cases}
      \vk-\ve \quad &\text{if $|\vk-\ve| \le \lambda^k$},\\
      \lambda^k \dfrac{\vk-\ve}{|\vk-\ve|} \quad &\text{if $|\vk-\ve| > \lambda^k$}.                                                           \end{cases}
\end{align*}
From~\eqref{kc} and~\eqref{kd}, it follows that
\begin{align} 
\nonumber
 \wk &\rightarrow \0 \quad \text{strongly in $L^{\infty)}(Q)$ and $L^{\infty)}(\Gamma)$,}\\
\intertext{which in conjunction with $|\nn\wk|\le 2 |\nn (\vk-\ve)|$ and convergence~\eqref{ka} lets us also assume} \label{mindb}
\wk &\rightarrow \0 \quad \text{weakly in $L^2(0,T;\Wn(\om))$.}
\end{align}
Finally, referring to~\cite[Sec.\ 2.3]{BMR07}, we only mention the last crucial property of $\wk$, namely
\begin{align*}
 \liminf_{k\to\infty} \itt \!\! \langle \dt \vk, \wk \rangle \, dt \geq 0.
\end{align*}
 Using $\wk$ as a test function in~\eqref{prvni} hence yields
\begin{align*}
 &\limsup_{k \to \infty}\iq \Bigl(\Sk \cdot \D\wk - \pk_1 \dir \wk \Bigr) \\
  & \quad = \limsup_{k \to \infty} \left[ -\itt \!\! \langle \dt \vk, \wk \rangle - \iq \Bigl( \dir \vok \cdot \wk + \nn \pk_2 \cdot \wk \Bigr)\!-\! \ig \sk \cdot \wk  \right] 
  \le 0.
\end{align*}
Due to 
\begin{align*}
|\dir \wk| \le \begin{cases}
               0 \quad &\text{in $\{ |\vk-\ve| \le \lambda^k \}$}, \\
\dfrac{2\lambda^k(|\nn \vk|+|\nn \ve|)}{|\vk-\ve|} \quad & \text{in $\{ |\vk-\ve| > \lambda^k \}$}
               \end{cases}
\end{align*}
and~\eqref{mindb}, we observe
\begin{align*}
 \limsup_{k \to \infty}& \iq (\Sk -\bs) \cdot \D\wk \le \limsup_{k\to \infty} \int_{\{ |\vk-\ve| > \lambda^k \}} \frac{2 \lambda^k}{|\vk-\ve|}|\pk_1|(|\nn \vk|+|\nn \ve|).
\end{align*}
The last estimate and Young's inequality imply
\begin{align}
\left.
\begin{aligned} \label{plyophony}
 \limsup_{k \to \infty}& \int_{\{ |\vk-\ve| \le \lambda^k \}} (\Sk -\bs) \cdot (\D\vk-\D\ve) \\ 
 &\qquad = \limsup_{k \to \infty} \Big( \iq (\Sk -\bs) \cdot \D\wk -  \int_{\{ |\vk-\ve| > \lambda^k \}} (\Sk -\bs) \cdot \D\wk \Big)  \\
 &\qquad \le C \cdot \limsup_{k\to \infty} \int_{\{ |\vk-\ve| > \lambda^k \}} \frac{\lambda^k}{|\vk-\ve|}I^k \\ 
 &\qquad = C \cdot \limsup_{k\to \infty} \Big( \int_{\{ N^{i_k+1} \geq |\vk-\ve| > N^{i_k} \}} \frac{N^{i_k}}{|\vk-\ve|}I^k + \int_{\{ |\vk-\ve| > N^{i_k+1} \}} \frac{N^{i_k}}{|\vk-\ve|}I^k \Big) \\ 
 &\qquad \le C\cdot \limsup_{k\to \infty}\Big( \int_{Q_{i_k}^k} I^k + \frac{1}{N^{i_k}} \iq I^k \Big) \\
 &\qquad \le \frac{C}{N},
\end{aligned}
\right.
\end{align}
the last step being asured by~\eqref{tothe}, with $C>0$ independent of $k$ or $N$.

\paragraph{Checking the $\llimsup$ condition.} First we show $Z^k\ddf(\Sk -\bs) \cdot (\D\vk-\D\ve)$ becomes \emph{almost} non-negative for $k\to\infty$, i.e.
\begin{align} \label{poznichkturkovi}
(Z^k)^- \rightarrow 0 \quad \text{strongly in $L^1(Q)$,}
\end{align}
where we recall $(Z^k)^- \ddf \min \{Z^k,0 \}$. Recalling furthermore our condensed notation $\Sk = \Sk(\D\vk,e^k)$ and monotonicity of $\Sk(\cdot,e^k)$ due to Lemma~\ref{monok}, we have a.e.\ in $Q$
\begin{align*}
Z^k&= (\Sk -\Sk(\D\ve,e^k)) \cdot (\D\vk-\D\ve)+(\Sk(\D\ve,e^k) -\bs) \cdot (\D\vk-\D\ve) \\[5pt]
&\geq (\Sk(\D\ve,e^k) -\bs) \cdot (\D\vk-\D\ve). 
\end{align*}
We will show
\begin{align} \label{poznichkturkovi2}
(\Sk(\D\ve,e^k) -\bs) \cdot (\D\vk-\D\ve) \rightarrow 0 \quad \text{strongly in $L^1(Q)$,}
\end{align}
which implies~\eqref{poznichkturkovi}. Let us write $Q = \{ |\D\ve| = 0\} \cup \{ |\D\ve| > 0\}$. This decomposition makes it easy to see that thanks to definitions of $\Sk$ and $\bs$, i.e.~\eqref{grafprvni} and~\eqref{synarchy}, respectively, and the fact that $(e^k)_k$ converges pointwise by~\eqref{kl}, we have  
\begin{align*}
\Sk(\D\ve,e^k) \rightarrow \bs \quad \text{a.e.\ in $Q$.}
\end{align*}
Next, again owing to~\eqref{grafprvni}, \eqref{synarchy} and Assumption~\ref{assonkoefs} we also observe
\begin{align*}
\big| \Sk(\D\ve,e^k) -\bs \big| \leq C |\D\ve|,
\end{align*}
a.e.\ in $Q$ independently of $k$. The dominated convergence theorem (mind that sequence $(\D\vk-\D\ve)_k$ is bounded in $L^2(Q)$ from~\eqref{ka}) hence yields~\eqref{poznichkturkovi2}.

Next we combine results~\eqref{plyophony} and~\eqref{poznichkturkovi}, deducing
\begin{align} \label{travis}
\limsup_{k \to \infty}& \int_{\{ |\vk-\ve| \le \lambda^k \}} \big| Z^k \big| \le \frac{C}{N}.
\end{align}
H\"older's and Chebyshev's inequalities therefore entail 
\begin{align*}
\iq \sqrt{\big| Z^k \big|} &\le \int_{\{ |\vk-\ve| \le \lambda^k \}} \sqrt{\big| Z^k \big|} + \int_{\{ |\vk-\ve| > \lambda^k \}} \sqrt{\big| Z^k \big|} \\
&\le \sqrt{|Q|} \sqrt{\int_{\{ |\vk-\ve| \le \lambda^k \}} \big| Z^k \big|} +C \sqrt{\big|\{ |\vk-\ve| > N \}\big|}\\
&\le C \Biggl(\sqrt{ \int_{\{ |\vk-\ve| \le \lambda^k \}} \big| Z^k \big|} + \frac{1}{N} \Biggr),
\end{align*}
producing, by means of~\eqref{travis},
$$\limsup_{k \to \infty} \iq \sqrt{\big| Z^k \big|} \le \frac{C}{\sqrt{N}}, $$
for every $N \in \N$, allowing us to assume 
\begin{align*}
Z^k &\to 0 \quad \text{a.e.\ in $Q$.}
\intertext{\hspace{\parindent} Now, let $\d > 0$ be arbitrary. Egoroff's theorem implies} 
Z^k &\to 0 \quad \text{strongly in $L^1(E)$}
\end{align*}
for some measurable $E\subset Q$ with $|Q\setminus E| < \d$. It means, recalling convergences~\eqref{ka} and~\eqref{kk} in the process, 
\begin{align} 0 = \lim_{k \to \infty} \int_{E} Z^k = \lim_{k \to \infty} \int_{E} (\Sk-\bs) \cdot (\D\vk - \D\ve)= \lim_{k \to \infty} \int_{E} \Sk \cdot \D\vk - \int_{E} \S \cdot \D\ve. \nonumber
\end{align}
Hence it is enough to take a sequence $\d_j \to 0$, corresponding $E^j \subset Q$ and we obtain in particular~\eqref{budesehodit}.

\paragraph{Internal energy inequality.}
In order to show~\eqref{ttreti} by passing to limit in~\eqref{druha}, it suffices to prove, due to convergences~\eqref{kc} and~\eqref{kg}--\eqref{ki},
\begin{align} \label{deity}
 \liminf_{k\to\infty} \iq \big[(\Sk \cdot\D\vk)u \big] \geq \iq \big[(\S \cdot\D\ve)u\big],
\end{align}
for  every non-negative $u\in \C([0,T];W^{1,\infty})$. But this is easy, since $(\Sk \cdot\D\vk)u \geq 0$ and $(\S \cdot\D\ve)u$ is summable in $Q$: Let $\e > 0$ be arbitrary. Then there is $\d>0$ such that for any measurable $E\subset Q$, $|Q\setminus E| < \d$, we have 
$$\int_{Q\setminus E} \big[(\S \cdot\D\ve)u\big] \leq \e.$$ 
Take the corresponding $E$ in Lemma~\ref{superconverlemma}. Then  
\begin{align*}
 \liminf_{k\to\infty} \iq \big[ (\Sk \cdot\D\vk)u\big]  \geq \liminf_{k\to\infty} \int_E \big[(\Sk \cdot\D\vk)u\big] =  \int_E \big[ (\S \cdot\D\ve)u\big] \geq \iq \big[ (\S \cdot\D\ve)u\big] - \e,
\end{align*}
implying~\eqref{deity}.

\paragraph{Initial condition for the velocity.}
Attainment of the initial condition~\eqref{initc} for the velocity is shown in a standard way; see e.g.~\cite[Sec.\ 5.4]{ja4}. We will reproduce it here mutatis mutandis just for the sake of reader's convenience.

Let $\zeta \in C^1_c([0,T))$ such that $\zeta(0)=-1$. Multiply equation~\eqref{pprvni} with $\zeta$ and integrate it over $(0,T)$. On account of the regularity results implied by~\eqref{ka}, \eqref{kb} and~\eqref{ke}, we have $\ve\in\C_w([0,T];L^2(\om))$ and for all $\w \in W^{1,\frac52}_{\n}(\om)$ we have
\begin{align} \label{verdarb}
  \int_{\om} \ve(0) \cdot \w \, dx & = \iq \big(\ve \cdot \w \zeta' - ( \S - p \I - \vov )\cdot \nn \w\zeta \big) \, dx\, dt - \ig \s \cdot \w\zeta \, dS \, dt. 
\end{align}
Doing the same thing in the approximated equation~\eqref{prvni} yield 
\begin{align*} 
  \int_{\om} \ve_0 \cdot \w \, dx & = \iq \big( \vk \cdot \w \zeta' - \big( \Sk - \pk \I - \vok \big)\cdot \nn \w\zeta \big) \, dx \, dt - \ig \sk \cdot \w\zeta \, dS \, dt. 
\end{align*}
Taking $k\to\infty$, using the properties~\eqref{ka}--\eqref{kj} and comparing the result with~\eqref{verdarb} yields 
\begin{align} \label{karfiol}
 \int_{\om} \big (\ve_0 - \ve(0)\big) \cdot \w \, dx = 0,
\end{align} 
so that $\ve(0)=\ve_0$ due to arbitrariness of $\w$.
 
Next we will show that 
\begin{align} \label{konecna}
 \vk(t) \to \ve(t) \quad \text{weakly in $L^2(\om)$ for all $t \in (0,T)$.}
\end{align}
Let $t \in (0,T)$, then sequence $(\vk(t) )_k$ is bounded in $L^2(\om)$ and we may assume that for a subsequence
$$\ve^{k_n}(t) \to \overline{\ve} \quad \text{weakly in $L^2(\om)$.} $$
 Consider~\eqref{prvni} for an arbitrary $\w \in W^{1,\frac52}_{\n}(\om)$ and multiply it with $\chi_{(0,t)}$, the characteristic function of~$(0,t)$. Then 
\begin{multline*}
 \int_{\om} \ve^{k_n}(t) \cdot\w \, dx - \int_{\om}  \ve_0 \cdot \w \, dx \\
 = \iqt \big( p^{k_n} \I + \vokn - \S^{k_n}\big) \cdot \nn \w \, dx \, dt - \igt \s^{k_n}\cdot \w \, dS \, dt,
\end{multline*}
which tends for $n \to \infty$ to 
\begin{align*}
 \int_{\om} \overline{\ve} \cdot\w \, dx - \int_{\om}  \ve_0 \cdot \w \, dx &= \iqt \big( p \I + \vov - \S\big) \cdot \nn \w \, dx \, dt - \igt \s\cdot \w \, dS \, dt \\[3pt]
&= \int_{\om} \ve(t) \cdot\w \, dx - \int_{\om}  \ve_0 \cdot \w \, dx,
\end{align*}
by the weak formulation~\eqref{pprvni}. Therefore $\overline{\ve} = \ve(t)$ and~\eqref{konecna} holds true in the end.

Regarding the strong convergence to the initial value in $L^2(\om)$, in the weak formulation~\eqref{prvni} we can take $\w = \vk$ and multiply the equation by $\chi_{(0,t)}$ for any $t \in (0,T)$, obtaining
\begin{align*}
 \| \vk(t) \|_2^2 - \| \ve_0 \|_2^2 = - \iqt \Sk \cdot \D \vk \, dx \, dt - \igt \sk \cdot \vk_{\tau} \, dS \, dt \le 0,
\end{align*}
by virtue of coercivity of $\A^k$ and $\B^k$ from Lemma~\ref{monok}.

Recalling $\ve\in\C_w([0,T];L^2(\om))$, $\ve(0)=\ve_0$ and then adding~\eqref{konecna} with the lower semicontinuity of the norm finally yields
\begin{align} \label{alneri}
 \lim_{t \to 0_+ } \left\| \ve(t) - \ve_0 \right\|_2^2  = \lim_{t \to 0_+ } \left\| \ve(t) \right\|_2^2 - \left\| \ve_0 \right\|_2^2  \le  \lim_{t \to 0_+ } \liminf_{k \to \infty} \| \vk(t) \|_2^2 - \left\| \ve_0 \right\|_2^2 \le  0.
\end{align}

\paragraph{Initial condition for the internal energy.} \label{intene}

First we note that~\eqref{primea} implies $$E \in \C([0,T];W^{-1,\mfrac{10}{9}}(\om)),$$ and using the same procedure as towards~\eqref{karfiol}, we could deduce 
\begin{align} \nonumber
E(0)=\dfrac{|\ve_0|^2}{2} +e_0.
\end{align} 
The already proven~\eqref{alneri} therefore yields
\begin{align} \label{vifamath}
\lim_{t\to0_+} \ii e(t)\, dx = \ii e_0 \, dx.
\end{align}
Now we finally recall the inequality~\eqref{treti}. From the convergence results~\eqref{kb}--\eqref{kk} it follows that for a.e.\ $t\in(0,T)$ and any non-negative $u\in W^{1,\infty}(\om)$, we have
\begin{align*}
\ii \sqrt{e(t)} \, u \, dx - \ii \sqrt{e_0} \,u \, dx   + \iqt \Bigl( -\sqrt{e}\, \ve + \dfrac{\ke \nn e}{2 \sqrt{e}} \Bigr)\cdot \nn u \, dx \, dt \geq 0.
\end{align*}
This inequality patently implies 
\begin{align} \nonumber
\essliminf_{t\to0_+} \ii \sqrt{e(t)} \, u \, dx \geq \ii \sqrt{e_0} \,u \, dx,
\end{align}
for any non-negative $u\in W^{1,\infty}(\om)$, whence also for any non-negative $u\in L^2(\om)$ by the density argument, so that
\begin{align} \label{vifamath2}
\essliminf_{t\to 0_+} \ii \sqrt{e(t)} \sqrt{e_0} \, dx \geq \ii e_0 \, dx.
\end{align}
Since $e(t) - e_0=\big(\sqrt{e(t)} + \sqrt{e_0}\big)\big(\sqrt{e(t)} - \sqrt{e_0}\big)$, combining ultimately~\eqref{vifamath} with~\eqref{vifamath2} produces
\begin{align*}
\| e(t) - e_0 \|_1^2 \le \| \sqrt{e(t)} + \sqrt{e_0} \|_2^2  \| \sqrt{e(t)} - \sqrt{e_0} \|_2^2 \le C \ii \big( e(t) + e_0 - 2 \sqrt{e(t)}  \sqrt{e_0} \big) \, dx \to 0
\end{align*}
in the essential sense for $t\to0_+$.
\end{proof}

\section{Appendix}
This section is dedicated for the postponed proof of Lemma~\ref{converlemma}.

\begin{customthm}{\ref{converlemma}} \label{prvnilemma}
Let $\A$ be of the form~\eqref{satis1} and $U \subset Q$ be measurable. Consider sequences $(\S^k)_k$, $(\D^k)_k$ and $(e^k)_k$ of measurable functions on $U$, satisfying
 \begin{align}
  &&(\S^k,\D^k,e^k) &\in \A &&\text{a.e.\ in $U$ for all $k\in\N$}, \label{shiftX} \\
  && \S^k &\rightarrow \S && \text{weakly in $L^2(U)$}, \label{weakSX}\\
  && \D^k &\rightarrow \D && \text{weakly in $L^{2}(U)$},\label{weakDX} \\
  && e^k &\rightarrow e && \text{a.e.\ in $U$}, \label{shift2X} \\
  &&\limsup_{k \to \infty} \int_U \S^k \cdot \D^k &\le \int_U \S \cdot \D.&& \label{chivaldoreX}
 \end{align}
Then $(\S,\D,e) \in \A$ a.e.\ in $U$ and $\S^k \cdot \D^k \to \S \cdot \D$ weakly in $L^1(U)$.
\end{customthm}

\begin{proof}
Let $\Aa, \BB \in L^2(U)$ be arbitrary. Denote
\begin{align*}
U_1&\ddf \{\tau_1(e)=0\}, \\
U_2&\ddf \{\tau_1(e)>0\}
\end{align*}
and recall 
\begin{align*} 
 (\Aa,\BB,e)\in\A \quad \Longleftrightarrow \quad \begin{cases}
    \BB=\BB(\Aa)= \dfrac{1}{2\ne} \cdot \dfrac{\left(|\Aa|-\tau_2(e) \right)^+}{|\Aa|}\Aa \quad \text{\!in $U_1$}, \\
\Aa= \Aa(\BB)= 2\ne\cdot \dfrac{\left(|\BB|-\tau_1(e) \right)^+}{|\BB|}\BB \quad \text{in $U_2$}.                                  
\end{cases}
\end{align*}
Our first aim is therefore to show 
\begin{align}
\begin{aligned}
\D &= \frac{1}{2\nu(e)}\cdot \frac{(|\S| - \ted)^+}{|\S|}\S  \quad \text{in } U_1,\\
\S &= 2\nu(e) \cdot \frac{(|\D| - \tej)^+}{|\D|}\D \quad \text{\!in } U_2.
\end{aligned}
\label{tojeono}
\end{align}
The technical hindrance in our situation is that $U_1 \neq \{\tau_1(e^k)=0\}$, i.e.\ $U_2 \neq \{\tau_1(e^k)>0\}$ in general. However, due to compactness of $(e^k)_k$ and the tractable behavior of the function $\nu(\cdot)$ (see Assumption~\ref{assonkoefs}), we are able to solve the problem using practically Minty's method based on monotonicity.

 Since both $\BB(\Aa)$ and $\Aa(\BB)$ are monotone and $\nu(\cdot) \geq c_1 >0$ by assumption~\eqref{visco}, we have
\begin{align}
I_1^k&\ddf(\S^k - \Aa) \cdot \left( \frac{1}{2\nu(e^k)}\cdot \frac{(|\S^k| - \ted)^+}{|\S^k|}\S^k  - \frac{1}{2\nu(e^k)} \cdot\frac{(|\Aa| - \ted)^+}{|\Aa|}\Aa  \right) \geq 0\quad \text{\,in } U_1, \label{I1} \\
I_2^k&\ddf(\D^k - \BB) \cdot \left( 2\nu(e^k)\cdot \frac{(|\D^k| - \tej)^+}{|\D^k|}\D^k  - 2\nu(e^k)\cdot \frac{(|\BB| - \tej)^+}{|\BB|} \BB \right) \geq 0\quad \text{\!in } U_2. \label{I2}
\end{align}
Condition~\eqref{shiftX} allows us to expand $I_1^k$ and $I_2^k$ as
\begin{align*}
I_1^k&=(\S^k - \Aa) \cdot \left( \frac{1}{2\nu(e^k)}\cdot \frac{(|\S^k| - \ted)^+}{|\S^k|}\S^k  - \frac{1}{2\nu(e^k)}\cdot \frac{(|\S^k| - \tau_2(e^k))^+}{|\S^k|}\S^k\right) \\ &+(\S^k - \Aa) \cdot \left( \frac{(|\D^k| - \tau_1(e^k))^+}{|\D^k|}\D^k  - \D^k\right) + (\S^k - \Aa) \cdot\left( \D^k
 -\frac{1}{2\nu(e^k)} \cdot\frac{(|\Aa| - \ted)^+}{|\Aa|}\Aa  \right)\\
&\dfe J_1^k + J_2^k + J_3^k
\intertext{and}
I_2^k&=(\D^k - \BB) \cdot \left( 2\nu(e^k)\cdot \frac{(|\D^k| - \tej)^+}{|\D^k|}\D^k - 2\nu(e^k)\cdot \frac{(|\D^k| - \tau_1(e^k))^+}{|\D^k|}\D^k \right)  \\
& + (\D^k - \BB) \cdot \left( \frac{(|\S^k| - \tau_2(e^k))^+}{|\S^k|}\S^k - \S^k \right)+ (\D^k - \BB) \cdot \left(
\S^k - 2\nu(e^k)\cdot \frac{(|\BB| - \tej)^+}{|\BB|} \BB \right)\\
&\dfe J_4^k + J_5^k + J_6^k.
\end{align*}
From definitions~\eqref{I1} and~\eqref{I2} it therefore follows that 
\begin{align}\label{dokopy}
 0\leq \int_{U_1} I_1^k + \int_{U_2} I_2^k = \int_{U_1} J_1^k + \int_{U_1} J_2^k + \int_{U_1} J_3^k + \int_{U_2} J_4^k + \int_{U_2} J_5^k + \int_{U_2} J_6^k.
\end{align}
Let us now show 
\begin{align}\label{J1245}
 \lim_{k\to \infty} \left( \int_{U_1} J_1^k + \int_{U_1} J_2^k + \int_{U_2} J_4^k + \int_{U_2} J_5^k \right) =0.
\end{align}
Since the function $f_a(x)\ddf(a-x)^+$ is a 1--Lipschitz mapping for any $a\in\R$, we deduce
\begin{equation*}
\left| (|\S^k| - \ted )^+ - (|\S^k| - \tau_2(e^k))^+ \right| \leq |\ted - \tau_2(e^k)|,
\end{equation*}
which, due to the continuity of $\tau_2$ and the pointwise convergence~\eqref{shift2X}, implies
\begin{align*}
(|\S^k| - \ted )^+ - (|\S^k| - \tau_2(e^k))^+\rightarrow 0 \quad \text{strongly in $L^{\infty)}(U)$},
\end{align*}
entailing 
\begin{align} \label{zimajedna}
J_1^k \rightarrow 0 \quad \text{strongly in $L^1(U_1)$},
\end{align} 
 owing to~\eqref{weakSX}, H\"older's inequality and Assumption~\ref{assonkoefs}, controlling the viscosity $\nu$. Similarly, in $U_1$ we have
\begin{equation*}
\begin{aligned}
\left| \frac{(|\D^k| - \tau_1(e^k) )^+}{|\D^k|} \D^k - \D^k \right| &= \left| \frac{(|\D^k| - \tau_1(e^k) )^+}{|\D^k|} \D^k - \frac{(|\D^k| - \tej )^+}{|\D^k|} \D^k \right| \\ &= \left| (|\D^k| - \tau_1(e^k) )^+ - (|\D^k| - \tej )^+ \right| \\& \leq |\tau_1(e^k) - \tej|,\phantom{\int}
\end{aligned}
\end{equation*}
whence 
\begin{align*}
\frac{(|\D^k| - \tau_1(e^k) )^+}{|\D^k|} \D^k - \D^k \rightarrow \0 \quad \text{strongly in $L^{\infty)}(U_1)$},
\end{align*}
implying this time
\begin{align} \label{zimadva}
J_2^k \rightarrow 0 \quad \text{strongly in $L^1(U_1)$},
\end{align}  
for the same reason as in~\eqref{zimajedna}. As for $J_4^k$ and $J_5^k$, these terms we can treat in the very analogous fashion, thus proving~\eqref{J1245}. If we plug this result back into~\eqref{dokopy}, we obtain 
\begin{align} \nonumber
0\leq \limsup_{k \to \infty} \left(\int_{U_1} J_3^k + \int_{U_2} J_6^k \right) &= \limsup_{k \to \infty} \Biggl[\int_{U_1}(\S^k - \Aa) \cdot\left( \D^k
 -\frac{1}{2\nu(e^k)} \cdot\frac{(|\Aa| - \ted)^+}{|\Aa|}\Aa  \right) \\&+\int_{U_2} (\D^k - \BB) \cdot \left(
\S^k - 2\nu(e^k)\cdot \frac{(|\BB| - \tej)^+}{|\BB|} \BB \right)\Biggr]. \label{J36}
\end{align}
We have yet to replace $\nu(e^k)$ with $\nu(e)$ on the right hand side, which is easy, given that boundedness of $\nu$ from below yields
\begin{equation*}
\left| \left( \frac{1}{\nu(e)} - \frac{1}{\nu(e^k)} \right)\cdot \frac{(|\Aa| - \ted)^+}{|\Aa|} \Aa \right|^2 \leq C |\Aa|^2
\end{equation*}
and $|\Aa|^2$ is in $L^1$ so that we may use the dominated convergence theorem (mind the continuity of $\nu$) to get
\begin{align}
\left( \frac{1}{\nu(e)} - \frac{1}{\nu(e^k)} \right)\cdot \frac{(|\Aa| - \ted)^+}{|\Aa|}\Aa&\rightarrow \0 \quad \text{strongly in $L^2(U)$}
\label{Amaj}
\intertext{and likewise}
\left( \nu(e) - \nu(e^k) \right)\cdot \frac{(|\BB| - \tej)^+}{|\BB|}\BB &\rightarrow \0 \quad \text{strongly in $L^2(U)$}.
\label{Bmaj}
\end{align}
Using~\eqref{Amaj} and~\eqref{Bmaj} in~\eqref{J36}, we can further estimate
\begin{align} \nonumber
0\leq \limsup_{k \to \infty} \Biggl[ & \int_{U_1}(\S^k - \Aa) \cdot\left( \D^k
 -\frac{1}{2\ne} \cdot\frac{(|\Aa| - \ted)^+}{|\Aa|}\Aa  \right) \\  +&\int_{U_2} (\D^k - \BB) \cdot \left(
\S^k - 2\ne\cdot \frac{(|\BB| - \tej)^+}{|\BB|} \BB \right)\Biggr]. \label{mistra}
\end{align}
Finally, recalling our assumptions~\eqref{weakSX}, \eqref{weakDX}, \eqref{chivaldoreX} and exploiting $U_2 = U\setminus U_1$, we pass to limit in~\eqref{mistra}, thence
\begin{align}
0 \leq \! \int_{U_1} \! (\S - \Aa) \cdot \left( \! \D - \frac{1}{2\nu(e)}\cdot \frac{(|\Aa| - \ted)^+}{|\Aa|}\Aa  \! \right) + \int_{U_2} \! (\D - \BB) \cdot \left( \! \S - 2\nu(e)\cdot \frac{(|\BB| - \tej)^+}{|\BB|} \BB \! \right).
\label{skoro2}
\end{align}
Now we take $\Aa \ddf \S \pm \e \CC$ for $\e>0$ and an arbitrary $\CC \in L^2(U)$, and $\BB \ddf \D$. Like in the standard Minty method, after $\e\to 0_+$ we deduce
\begin{align*}
0 = \int_{U_1}  \CC \cdot \left( \D - \frac{1}{2\nu(e)}\cdot \frac{(|\S| - \ted)^+}{|\S|}\S  \right),
\end{align*}
which implies, due to the arbitrary nature of $\CC$, 
\begin{align*}
\D &= \frac{1}{2\nu(e)}\cdot \frac{(|\S| - \ted)^+}{|\S|}\S  \quad \text{in } U_1.
\intertext{Similarly, if we in~\eqref{skoro2} set $\Aa \ddf \S$ and $\BB \ddf \D \pm \e \CC$, we infer  }
\S &= 2\nu(e)\cdot \frac{(|\D| - \tej)^+}{|\D|}\D  \quad \text{\!in } U_2,
\end{align*}
i.e.~\eqref{tojeono}, in other words $(\S, \D, e) \in \A$.

Towards the second part of the lemma, namely
\begin{align} \label{dhl}
\S^k \cdot \D^k \rightarrow \S \cdot \D \quad \text{weakly in $L^1(U)$},
\end{align} 
let us set $\Aa \ddf \S$ and $\BB \ddf \D$ in~\eqref{I1}~and \eqref{I2}, respectively. Since this choice makes the right-hand side of~\eqref{skoro2} vanish, the procedure between~\eqref{dokopy} and~\eqref{skoro2} implies
\begin{equation*}
\lim_{k\to \infty}\left(\int_{U_1} I_1^k + \int_{U_2} I_2^k \right)= 0.
\end{equation*}
Due to non-negativity of $I_1^k$ and $I_2^k$ we therefore obtain $I_1^k \to 0$ strongly in $L^1(U_1)$, and since $J_3^k = I_1^k - J_1^k - J_2^k$ and the right-hand side converges to zero strongly in $L^1(U_1)$ due to~\eqref{zimajedna} and~\eqref{zimadva}, we acquire
\begin{align*} 
J_3^k = (\S^k - \S) \cdot \left( \D^k - \frac{1}{2\nu(e^k)}\cdot \frac{ (|\S| - \ted)^+}{|\S|}\S \right) \rightarrow 0 \quad \text{strongly in $L^1(U_1)$}.
\end{align*} 
By~\eqref{Amaj}, we may replace $\nu(e^k)$ with $\nu(e)$ in the above convergence, turning the resulting term into $\D$ by~\eqref{tojeono}. But this tells us nothing less than 
\begin{align*} 
(\S^k - \S) \cdot (\D^k - \D) \rightarrow 0 \quad \text{strongly in $L^1(U_1)$}.
\end{align*} 
Completely analogously, from $J_6^k = I_2^k - J_4^k - J_5^k$, we can obtain the above convergence also in $L^1(U_2)$, i.e.\ in the end 
\begin{align*} 
(\S^k - \S) \cdot (\D^k - \D) \rightarrow 0 \quad \text{strongly in $L^1(U)$}.
\end{align*}
Now exploiting the assumed weak convergences~\eqref{weakSX} and~\eqref{weakDX} yields~\eqref{dhl}, considering
\begin{align*} 
\S^k\cdot\D^k=(\S^k - \S) \cdot (\D^k - \D) + \S \cdot (\D^k - \D)+\S^k\cdot\D \rightarrow \S\cdot\D \quad \text{weakly in $L^1(U)$}.
\end{align*}
\end{proof}

\begin{acknowledgements}
 \noindent E.\ Maringov\'a was supported by the ERC-CZ project LL1202 financed by the Ministry of Education, Youth and Sports of the Czech Republic and additionally by the grant GAUK 584217. J.\ \v Zabensk\'y was partially supported by the ERC-CZ project LL1202.
\end{acknowledgements}

%
%
%
%

\bibliographystyle{plainnat}

\end{document}